\documentclass{amsart}

\usepackage{amsmath}
\usepackage{amssymb, amscd, fixmath}
\usepackage{amsthm}
\usepackage{enumerate}
\usepackage[cmtip,all]{xy}

\theoremstyle{plain}
\newtheorem{thm}{Theorem}[section]
\newtheorem{lem}[thm]{Lemma}
\newtheorem{prop}[thm]{Proposition}

\theoremstyle{remark}

\numberwithin{equation}{section}

\def\End{\operatorname{End}}

\def\Hom{\operatorname{Hom}}
\def\Ind{\operatorname{Ind}}
\def\Irr{\operatorname{Irr}}
\def\Isom{\operatorname{Isom}}

\newcommand{\la}{\langle}
\newcommand{\ra}{\rangle}

\def\GL{\mathrm{GL}}

\def\Sp{\mathrm{Sp}}

\def\rH{\mathrm{H}}
\def\rD{\mathrm{D}}
\def\rF{\mathrm{F}}
\def\rG{\mathrm{G}}
\def\rP{\mathrm{P}}

\def\CC{\mathbb{C}}

\def\ZZ{\mathbb{Z}}

\setlength{\oddsidemargin}{0.2in}
\setlength{\evensidemargin}{0.2in}
\setlength{\textwidth}{6.1in}
\newcommand{\abs}[1]{\lvert#1\rvert}

\begin{document}

\title[The Howe duality conjecture]
{The Howe duality conjecture:\\ Quaternionic case}
\author{Wee Teck Gan}
\address{Department of Mathematics, National University of Singapore, 10 Lower Kent Ridge Road, Singapore 119076}
\email{matgwt@nus.edu.sg}
\author{Binyong Sun}
\address{Hua Loo-Keng Key Laboratory of Mathematics, Institute of Mathematics,
Academy of Mathematics and Systems Science, CAS, Beijing, 100190, P.R. China}
\email{sun@math.ac.cn}

\subjclass[2000]{Primary 11F27, Secondary 22E50}

\keywords{Howe duality conjecture, theta correspondence, quaternionic dual pair}
\date{\today}

 \dedicatory{In celebration of  \\
Professor Roger Howe's  70th birthday}

\begin{abstract}
We complete the proof of the Howe duality conjecture in the theory of local theta
correspondence by treating the remaining case of  quaternionic dual pairs in arbitrary residual characteristic.
\end{abstract}

\maketitle


\section{\textbf{Introduction}}

Let $\rF$ be a non-archimedean local field of characteristic not $2$.
Let $W$ be a finite-dimensional symplectic vector space over $\rF$ with symplectic form $\langle \,,\, \rangle_W$.
Write
\begin{equation}\label{meta0}
   1\rightarrow \{1,\varepsilon_W\}\rightarrow
   \widetilde{\Sp}(W)\rightarrow
   \Sp(W)\rightarrow 1
\end{equation}
for the metaplectic double cover of the symplectic group $\Sp(W)$. It does not split unless $W=0$. Denote by
$\rH(W):=W\times \rF$ the Heisenberg group attached to $W$, with
group multiplication
\[
      (u, \alpha)(v,\beta):=(u+v, \alpha +\beta+\langle u,v\rangle_W), \qquad u,v\in W, \ \alpha,\beta\in \rF.
\]
Then $\widetilde{\Sp}(W)$ acts on $\rH(W)$ as group automorphisms
through the action of $\Sp(W)$ on $W$, and we may form the semi-direct
product $\widetilde{\mathrm J}(W):=\widetilde{\Sp}(W)\ltimes \mathrm
H(W)$.

 Fix an arbitrary non-trivial unitary
character $\psi: \rF\rightarrow \mathbb C^\times$. Up to isomorphism, there is a unique
smooth representation $\omega_\psi$ of $\widetilde{\mathrm J}(W)$ (called a Weil representation) such that (\emph{cf.} \cite[Section IV.43]{weil})
\begin{itemize}
  \item ${\omega_\psi}|_{\rH(W)}$ is irreducible and has central character $\psi$;
  \item $\varepsilon_W\in\widetilde{\Sp}(W)$ acts through the scalar multiplication by
  $-1$.
\end{itemize}
Unless $W=0$, the above second condition is a consequence of the first one.

Denote by $\tau$ the involution of $\End_\rF(W)$ specified by
\[
   \langle x\cdot u, v\rangle_W=\langle u, x^\tau\cdot v\rangle_W, \qquad u,v\in W,\,x\in \End_\rF(W).
\]
Let $(A, A')$ be a pair of $\tau$-stable semisimple $\rF$-subalgebras of $\End_\rF(W)$ which are mutual centralizers of each other. Put $G:=A\cap \Sp(W)$ and $G':=A'\cap \Sp(W)$, which are closed subgroups of $\Sp(W)$. Following Howe, we call the pair $(G,G')$ so obtained a {\em reductive dual pair} in $\Sp(W)$.
We say that the pair $(A, A')$ (or the reductive dual pair $(G,G')$) is irreducible of type I if  $A$ (or equivalently $A'$) is a simple algebra, and say that it is  irreducible of type II if  $A$ (or equivalently $A'$) is the product of two simple algebras which are exchanged by $\tau$. A complete classification of such dual pairs has been given by Howe.
\vskip 5pt

For every closed
subgroup $H$ of  $\Sp(W)$, write
$\widetilde H$ for the double cover of $H$ induces by the
metaplectic cover \eqref{meta0}. Then $\widetilde G$ and $\widetilde
G'$ commute with each other inside the group $\widetilde{\Sp}(W)$ (\emph{cf.} \cite[Chapter 2, Lemma II.5]{mvw}).
Thus, the Weil representation $\omega_{\psi}$ can be regarded as a representation of $\widetilde G \times \widetilde G'$.
\vskip 5pt

 For every $\pi\in \Irr(\widetilde G)$, put
 \[
  \Theta_{\psi}(\pi):=(\omega_\psi\otimes \pi^\vee)_{\widetilde G},
\]
 to be viewed as a smooth representation of $\widetilde G'$. Here and as usual, a superscript ``$\,^\vee$" indicates the contragredient representation, a subscript group indicates the coinvariant space, and  ``$\Irr$" indicates the set of isomorphism classes of irreducible admissible representations of the group. It was proved by Kudla \cite{k83} that the representation $\Theta_{\psi}(\pi)$ is admissible and has finite length. Denote by $\theta_{\psi}(\pi)$ the maximal semisimple quotient of $\Theta_{\psi}(\pi)$, which is called the theta lift of $\pi$. In this paper, we complete the proof of the following Howe duality conjecture.
\vskip 5pt
\noindent{\bf \underline{The Howe Duality Conjecture}}
\vskip 5pt

\noindent  For every reductive dual pair ($G, G')$ and  every $\pi\in \Irr(\widetilde G)$,  the theta lift $\theta_{\psi}(\pi)$ is irreducible if it is non-zero.
\vskip 5pt

 The Howe duality conjecture is easily reduced to the case when the pair $(A, A')$ is irreducible (of type I or II).
 It has been proved by Waldspurger \cite{w90} when the residual characteristic of $\rF$ is not $2$.
 For irreducible reductive dual pairs of type II, the conjecture was proved in full  and more simply by Minguez in \cite{mi}. Every  irreducible reductive dual pair of type I is an orthogonal-symplectic dual pair, a unitary dual pair, or a quaternionic dual pair \cite[Section 5]{H1}. For orthogonal-symplectic dual pairs and unitary dual pairs, the conjecture was proved in \cite{gt} (it was earlier proved in \cite{LST} that $\theta_{\psi}(\pi)$ is multiplicity free). For the remaining case of quaternionic dual pairs, only a partial result was obtained  in \cite{gt} (for Hermitian representations). The reason is that \cite{gt} makes use of the MVW-involution on the category of smooth representations, and it has  been shown in \cite{sun} that such an involution does not exist in the quaternionic case.
\vskip 5pt

The purpose of this paper is to explain how the use of the MVW-involution can be avoided, thus completing the proof of the Howe duality conjecture in the  quaternionic case.
The lack of an MVW-involution necessitates relating  the theta lifts of $\pi$ and $\pi^\vee$, and the key new ingredient is provided by
the following consequence of the conservation relations shown in \cite[Equalities (12)]{sz}.
\vskip 5pt

\begin{lem}\label{E:nonvan}
Assume that $(G,G')$ is irreducible. Then for every $\pi\in \Irr(\widetilde G)$,
\begin{equation}\label{E:nonvan0}
   \theta_\psi(\pi)\neq 0\qquad\textrm{if and only if}\qquad  \theta_{\bar \psi}(\pi^\vee)\neq 0,
\end{equation}
where  $\bar \psi$ denotes the complex conjugation of $\psi$.
\end{lem}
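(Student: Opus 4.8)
The plan is to turn the non-vanishing statement into an equality of first occurrence indices in the two relevant Witt towers, and then to read off that equality from the conservation relations. First I would record the standard reformulation: since $\theta_\psi(\pi)$ is the maximal semisimple quotient of $\Theta_\psi(\pi)$, one has $\theta_\psi(\pi)\neq 0$ iff $\Theta_\psi(\pi)=(\omega_\psi\otimes\pi^\vee)_{\widetilde G}\neq 0$, and the latter holds iff $\Hom_{\widetilde G}(\omega_\psi,\pi)\neq 0$, i.e. $\pi$ is a quotient of $\omega_\psi|_{\widetilde G}$; likewise $\theta_{\bar\psi}(\pi^\vee)\neq 0$ iff $\pi^\vee$ is a quotient of $\omega_{\bar\psi}|_{\widetilde G}$. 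I would also note the isomorphism $\omega_\psi^\vee\cong\omega_{\bar\psi}$, valid because ${\omega_\psi}|_{\rH(W)}$ is irreducible with central character $\psi$, so its smooth contragredient is irreducible with central character $\bar\psi$ and hence is the Weil representation of $\widetilde{\mathrm J}(W)$ attached to $\bar\psi$. For type II dual pairs the non-vanishing of the lift is governed by a dimension inequality insensitive to $\pi$ and $\psi$, so \eqref{E:nonvan0} is clear there; and for the Hermitian representations (those $\pi$ with $\pi^\vee\cong\bar\pi$) complex conjugation gives $\overline{\Theta_\psi(\pi)}\cong\Theta_{\bar\psi}(\pi^\vee)$ directly, as was already exploited in \cite{gt}. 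So the case that matters is a type I pair with $\pi$ non-Hermitian; the argument below works for all type I pairs, the irreducible quaternionic ones $(G,G')=(G(V),G(V'))$, $V$ an $\epsilon$-Hermitian and $V'$ a $(-\epsilon)$-Hermitian space over the quaternion division $\rF$-algebra, being those of genuine interest.

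Next I would invoke the Witt towers. Fix $\pi\in\Irr(\widetilde G)$ and view $V'$ as a member of its Witt tower inside the set of $(-\epsilon)$-Hermitian spaces over the quaternion algebra; there are exactly two such Witt towers, they depend only on $V$, and the same two towers serve for $\pi^\vee$. Label them $t=1,2$, with the given $V'$ lying in tower $t_0$. By the persistence of theta lifting along a Witt tower (Kudla), for $W'$ in tower $t$ the lift of $\pi$ with respect to $\psi$ is nonzero once $\dim W'$ reaches a threshold $m_t(\pi,\psi)\in\ZZ_{\geq 0}\cup\{\infty\}$ and remains nonzero for all larger members, and similarly for $(\pi^\vee,\bar\psi)$. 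Hence $\theta_\psi(\pi)\neq 0\iff\dim V'\geq m_{t_0}(\pi,\psi)$ and $\theta_{\bar\psi}(\pi^\vee)\neq 0\iff\dim V'\geq m_{t_0}(\pi^\vee,\bar\psi)$, so it suffices to prove
\[
   m_t(\pi,\psi)=m_t(\pi^\vee,\bar\psi)\qquad(t=1,2).
\]

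It is this last equality that the conservation relations provide. The conservation relation of Kudla--Rallis, established in general in \cite{sz}, gives $m_1(\pi,\psi)+m_2(\pi,\psi)=c(V)$ for a constant $c(V)$ depending only on $V$ — in particular independent of $\pi$ and unchanged under $\psi\mapsto\bar\psi$ — hence also $m_1(\pi^\vee,\bar\psi)+m_2(\pi^\vee,\bar\psi)=c(V)$; and \cite[Equalities (12)]{sz} compare the first occurrence data of $(\pi,\psi)$ with that of $(\pi^\vee,\bar\psi)$. Feeding the two conservation identities into these comparison equalities pins down $m_t(\pi,\psi)=m_t(\pi^\vee,\bar\psi)$ for each $t$ separately, which is exactly what is needed. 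The part I expect to require the most care is the bookkeeping: aligning the labelling of the two Witt towers in \cite{sz} with the towers occurring here, checking that $c(V)$ really is insensitive to $\psi\mapsto\bar\psi$, and — above all — confirming that the statements of \cite{sz} being invoked hold for an arbitrary $\pi\in\Irr(\widetilde G)$ rather than only for unitarizable (or Hermitian) $\pi$, since it is precisely the non-Hermitian representations that are not already handled by the complex-conjugation argument $\overline{\Theta_\psi(\pi)}\cong\Theta_{\bar\psi}(\pi^\vee)$.
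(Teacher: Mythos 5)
The paper proves this lemma only by citing \cite[Equalities (12)]{sz}, so there is no in-paper argument to compare your work against; what you have written is a reconstruction of what that citation must be delivering. Your framework is the natural one --- translate non-vanishing into first-occurrence indices along Witt towers, invoke Kudla persistence, and try to extract $m_t(\pi,\psi)=m_t(\pi^\vee,\bar\psi)$ from the conservation relations --- and your preliminary observations (that $\theta_\psi(\pi)\neq 0$ iff $\Hom_{\widetilde G}(\omega_\psi,\pi)\neq 0$, that $\omega_\psi^\vee\cong\omega_{\bar\psi}$ as $\widetilde{\mathrm J}(W)$-representations, and that complex conjugation already disposes of Hermitian $\pi$ as in \cite{gt}) are all correct.

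The step that is not actually carried out is the one you flag at the end. The conservation identity $m_1(\cdot)+m_2(\cdot)=c(V)$, applied separately to $(\pi,\psi)$ and to $(\pi^\vee,\bar\psi)$, gives no comparison between the two on its own; what one needs is a cross-inequality such as $m_1(\pi,\psi)+m_2(\pi^\vee,\bar\psi)\ge c(V)$ (together with its tower-swapped twin), which combined with the two conservation identities forces equality tower by tower. Such a cross-inequality does emerge from the doubling see-saw applied to $\pi\boxtimes\pi^\vee$ via $\omega_\psi^\vee\cong\omega_{\bar\psi}$ --- this is in fact how the lower-bound half of the conservation relations is obtained --- but your proposal does not isolate this ingredient: it simply asserts that ``Equalities (12) compare the first occurrence data of $(\pi,\psi)$ with that of $(\pi^\vee,\bar\psi)$'' without saying what those equalities state or why they contain that comparison rather than merely restating the conservation sum. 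To turn the plan into a proof you must quote the precise inequality from \cite{sz} and verify it for arbitrary $\pi\in\Irr(\widetilde G)$, not just unitarizable or Hermitian $\pi$. A secondary imprecision is the claim of ``exactly two Witt towers'' for $(-\epsilon)$-Hermitian quaternionic spaces: in the Hermitian case ($-\epsilon=1$) the isometry class is determined by dimension, so within a fixed parity there is a single geometric tower and the complementary structure in the conservation relation is realised differently; that bookkeeping also has to be aligned with \cite{sz} before the argument closes.
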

In proving the Howe duality conjecture, one needs to strengthen Lemma \ref{E:nonvan} to the identity
\begin{equation} \label{E:identity}
(\theta_{\psi}(\pi))^\vee\cong \theta_{\bar \psi}(\pi^\vee)   \quad \text{ for every $\pi\in \Irr(\widetilde G)$.} \end{equation}
  Hence, the main result of this paper  is the following theorem, which encompasses the Howe duality conjecture and the identity  (\ref{E:identity}).
\vskip 5pt


 \begin{thm} \label{T:howe3}
 Assume that $(G,G')$ is irreducible, and the size of $G$ is no smaller than that of $G'$. Then for all $\pi,\sigma\in \Irr(\widetilde G)$,
\begin{itemize}
      \item $\theta_\psi(\pi)$ is irreducible if it is non-zero;
      \item if $\theta_{\psi}(\pi)\cong \theta_\psi(\sigma)\neq 0$, then $\pi\cong \sigma$;
      \item $(\theta_\psi(\pi))^\vee\cong \theta_{\bar \psi}(\pi^\vee)$.
    \end{itemize}
 Consequently, the Howe duality conjecture holds for both $(G,G')$ and $(G',G)$, and for every $\pi'\in \Irr(\widetilde G')$, $(\theta_\psi(\pi'))^\vee\cong \theta_{\bar \psi}({\pi'}^\vee)$.
\end{thm}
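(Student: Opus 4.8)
The plan is to establish all three bullets, together with the nonvanishing equivalence of Lemma~\ref{E:nonvan}, \emph{simultaneously} by induction on $\dim_\rF W$, letting $(G,G')$ run over all irreducible quaternionic dual pairs of a fixed size; the smallest cases are handled directly, and the arguments of \cite{gt} and \cite{mi} give the template. The first step is the standard reduction of the Howe duality conjecture: by an exactness argument with Jacquet modules, the first two bullets for $(G,G')$ follow once $\Theta_\psi(\pi)$ is known to have finite length (Kudla, \cite{k83}) and once a suitable $\Hom$-space between two big theta lifts is shown to have dimension $\le\delta_{\pi,\sigma}$ for $\pi,\sigma\in\Irr(\widetilde G)$. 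At exactly this point \cite{gt} uses the MVW-involution to rewrite a contragredient $(\Theta_\psi(\sigma))^\vee$ as a genuine theta lift; since by \cite{sun} no MVW-involution exists in the quaternionic case, one must instead carry the third bullet along in the induction and use it for that rewriting.

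The substitute for the MVW-involution is elementary. Because $(\omega_\psi)^\vee\cong\omega_{\bar\psi}$, dualizing the defining surjection $\omega_\psi\otimes\pi^\vee\twoheadrightarrow\Theta_\psi(\pi)$ gives a natural embedding of $(\Theta_\psi(\pi))^\vee$ into $\omega_{\bar\psi}\otimes\pi$ whose image consists of $\widetilde G$-invariant vectors; composing with the projection onto the $\widetilde G$-coinvariants $\Theta_{\bar\psi}(\pi^\vee)=(\omega_{\bar\psi}\otimes\pi)_{\widetilde G}$ produces a canonical $\widetilde G'$-equivariant map
\[
\iota_\pi\colon\ (\Theta_\psi(\pi))^\vee\ \longrightarrow\ \Theta_{\bar\psi}(\pi^\vee).
\]
The third bullet is then the statement that, whenever $\theta_\psi(\pi)\neq0$, the map $\iota_\pi$ is nonzero and carries the socle $(\theta_\psi(\pi))^\vee$ of its source isomorphically onto the cosocle $\theta_{\bar\psi}(\pi^\vee)$ of its target. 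Here Lemma~\ref{E:nonvan} ensures that source and target are simultaneously nonzero, and the first bullet — applied within the same induction, also with $\bar\psi$ in place of $\psi$ — ensures that the relevant socle and cosocle are irreducible, so that merely knowing $\iota_\pi\neq0$ already pins them down.

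To obtain the $\Hom$-space bound I would use the doubling construction applied to the group $G'$ whose representations are being compared. With $V'$ the $\varepsilon$-Hermitian space such that $G'=\U(V')$, form $V'\oplus V'^-$, the dual pair $(G,\U(V'\oplus V'^-))$ in $\Sp(W\oplus W^-)$, and the see-saw whose other member is the product of the given pair with its conjugate, $(G\times G,\ \U(V')\times\U(V'^-))$. Using the see-saw identity, the identification $\Theta_\psi^{W^-}\cong\Theta_{\bar\psi}^W$, and the third bullet for $\bar\psi$ — which is precisely what lets one trade the contragredient of a $\bar\psi$-theta lift for a $\psi$-theta lift so that the doubling picture applies — the $\Hom$-space becomes a space of maps out of (a constituent of) the theta lift to $\widetilde G$ of a degenerate principal series $I(s_0)$ of $\widetilde{\U}(V'\oplus V'^-)$. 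One then analyzes $I(s_0)$ through Kudla's filtration of the Weil representation of the doubled pair: its leading piece contributes $\delta_{\pi,\sigma}$ via the local theory of the doubling zeta integral, while the remaining ``boundary'' pieces are built from theta lifts attached to strictly smaller dual pairs, and are controlled by the inductive hypothesis. The hypothesis that the size of $G$ is no smaller than that of $G'$ is exactly what forces these boundary contributions to vanish, leaving the bound $\le\delta_{\pi,\sigma}$; this yields the first two bullets for $(G,G')$, and combining it with $\iota_\pi$ and Lemma~\ref{E:nonvan} yields the third.

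The ``consequently'' part is then formal. If $\pi'\in\Irr(\widetilde G')$ has $\Theta_\psi(\pi')\neq0$ and $\tau\in\Irr(\widetilde G)$ is an irreducible quotient of $\Theta_\psi(\pi')$, then $\Hom_{\widetilde G'}(\Theta_\psi(\tau^\vee),\pi')\neq0$, so by Howe duality for $(G,G')$ one gets $\pi'\cong\theta_\psi(\tau^\vee)$ with the $\Hom$-space one-dimensional; hence $\tau$ is the unique irreducible quotient of $\Theta_\psi(\pi')$ and occurs there with multiplicity one (using injectivity of $\theta_\psi$ on $\Irr(\widetilde G)$), so $\theta_\psi(\pi')$ is irreducible, and the same bookkeeping gives injectivity of $\pi'\mapsto\theta_\psi(\pi')$. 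Applying the third bullet for $(G,G')$ to $\tau^\vee$, together with Lemma~\ref{E:nonvan}, then identifies $(\theta_\psi(\pi'))^\vee=\tau^\vee$ with $\theta_{\bar\psi}({\pi'}^\vee)$. I expect the main obstacle to be precisely the place where the MVW-involution has been removed: one must show that the crude map $\iota_\pi$ — which a priori could vanish, or could fail to be injective on socles — is in fact non-degenerate and matches socle with cosocle, and this has to be carried out in lock-step with the doubling bound, since in the induction each of the three bullets feeds into the proof of the others; it is here that Lemma~\ref{E:nonvan}, hence the conservation relations of \cite{sz}, become indispensable.
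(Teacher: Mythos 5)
Your overall strategy---doubling plus Kudla's filtration plus the conservation relations as a substitute for the MVW-involution---is in the right ballpark, and you correctly identify Lemma~\ref{E:nonvan} as the crucial input that the paper supplies. But there are two concrete gaps. First, the paper never constructs an explicit map $\iota_\pi\colon(\Theta_\psi(\pi))^\vee\to\Theta_{\bar\psi}(\pi^\vee)$, and indeed you flag yourself that the nondegeneracy of such a map is ``the main obstacle'' without resolving it; the invariants-to-coinvariants composition has no a priori reason to be nonzero. The paper instead derives the contragredient identity purely formally: setting $\mathrm m_\psi(\pi,\pi'):=\dim\Hom_{\widetilde G\times\widetilde G'}(\omega_\psi,\pi\boxtimes\pi')$, it proves that $\mathrm m_\psi(\pi,\pi')\neq 0$ if and only if $\mathrm m_{\bar\psi}(\pi^\vee,{\pi'}^\vee)\neq 0$, by applying Lemma~\ref{E:nonvan} to the pair $(G',G)$ to produce \emph{some} $\sigma\in\Irr(\widetilde G)$ with $\mathrm m_{\bar\psi}(\sigma,{\pi'}^\vee)\neq0$, and then invoking the inequality $\dim\Hom_{\widetilde G'}(\theta_\psi(\pi)\otimes\theta_{\bar\psi}(\sigma),\CC)\leq\dim\Hom_{\widetilde G}(\pi\otimes\sigma,\CC)$ of Proposition~\ref{T:mainq00} to force $\sigma\cong\pi^\vee$. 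All three bullets and the ``consequently'' clause then follow from this equivalence and the same inequality by a short dimension count; no explicit intertwiner between the two big theta lifts is needed.

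Second, your doubling is set up on the wrong side. You propose to double the space $V'$ underlying $G'$ and filter a degenerate principal series of $\U(V'\oplus V'^-)$ as a $\U(V')\times\U(V'^-)$-module. But the target bound is $\dim\Hom_{\widetilde G}(\pi\otimes\sigma,\CC)$, a Hom-space over $\widetilde G=\rG(U)$, and this only appears when one embeds the Rallis quotient $(\omega^\square)_{\rG(V)^\triangle}$ into a degenerate principal series $\mathrm I(s_{V,U})$ of the \emph{$U$-doubled} group $\rG(U^\square)$ and applies Kudla's filtration as a $\rG(U)\times\rG(U^-)$-module, whose bottom layer is $C_c^\infty(\rG(U))$ and yields exactly $\Hom_{\rG(U)}(\pi\otimes\sigma,\CC)$. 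Doubling $V$ would give $C_c^\infty(\rG(V))$ at the bottom and the inequality on the wrong side. Relatedly, the hypothesis $\mathrm{size}(G)\geq\mathrm{size}(G')$ does not make boundary contributions vanish: the paper proves the off-boundary case directly (Proposition~\ref{P:nonb}) and handles the boundary case by an induction on $\dim U$ using Propositions~\ref{P:induced} and~\ref{induction}, showing that on the boundary both sides of the inequality in Proposition~\ref{T:mainq2} equal $1$.
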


Here the size of $G$ is defined to be
\[
   \mathrm{size}(G):=\left\{
                   \begin{array}{ll}
                    \frac{n_A}{2}+\frac{\dim_K A^{\tau=-1}}{n_A} , & \hbox{if $(G,G')$ is an orthogonal-symplectic or quaternionic dual pair;} \\
                     n_A, & \hbox{otherwise,}
                   \end{array}
                 \right.
\]
where $K$ denotes the center of $A$, $A^{\tau=-1}:=\{\alpha\in A\mid \alpha^\tau=-\alpha\}$, and $n_A$ denotes the integer such that $\mathrm{rank}_K A=n_A^2$.  The size of $G'$ is analogously defined.

\vskip 5pt

In fact, exploiting Lemma \ref{E:nonvan}, Theorem \ref{T:howe3} is equivalent to the following proposition.

 \begin{prop} \label{T:mainq00}
 Assume that $(G,G')$ is irreducible, and the size of $G$ is no smaller than that of $G'$. Then for all $\pi,\sigma\in \Irr(\widetilde G)$,
 \begin{equation}\label{dimleq10}
   \dim \Hom_{\widetilde G'}(\theta_{\psi}(\pi)\otimes \theta_{\bar \psi}(\sigma),\mathbb C)\leq \dim \Hom_{\widetilde G}(\pi\otimes \sigma, \mathbb C).
 \end{equation}
\end{prop}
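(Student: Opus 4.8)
The plan is to follow the doubling strategy of \cite{gt}, with the role of the (non-existent) MVW-involution played by the passage to $W^{-}$, which fuses $\omega_\psi$ and $\omega_{\bar\psi}$ into a single Weil representation. Write $W=V\otimes_D V'$ with $G=\U(V)$, $G'=\U(V')$, and set $\mathbb{V}=V\oplus V^{-}$, $\mathbb{G}=\U(\mathbb{V})$ and $\mathbb{W}=\mathbb{V}\otimes_D V'=W\oplus W^{-}$, so that $(\mathbb{G},G')$ is a reductive dual pair in $\Sp(\mathbb{W})$ and $\widetilde G\times\widetilde G$ embeds in $\widetilde{\mathbb{G}}$ as the symmetric subgroup preserving the decomposition $\mathbb{V}=V\oplus V^{-}$. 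First I would pass to the big theta lifts: since $\theta_\psi(\pi)$ (resp.\ $\theta_{\bar\psi}(\sigma)$) is a quotient of $\Theta_\psi(\pi)$ (resp.\ $\Theta_{\bar\psi}(\sigma)$), the left side of \eqref{dimleq10} is at most $\dim\Hom_{\widetilde{G'}}(\Theta_\psi(\pi)\otimes\Theta_{\bar\psi}(\sigma),\mathbb{C})$. Unfolding the coinvariant spaces defining $\Theta$ and using $\omega_{\bar\psi,W}\cong\omega_{\psi,W^{-}}$, this equals $\Hom_{\widetilde G\times\widetilde G\times\widetilde{G'}}(\omega_{\psi,\mathbb{W}},\pi\boxtimes\sigma\boxtimes\mathbb{C})$ with $\widetilde{G'}$ the diagonal; taking $\widetilde{G'}$-coinvariants first gives
\[
  \Hom_{\widetilde G\times\widetilde G}\!\bigl(\Omega|_{\widetilde G\times\widetilde G},\ \pi\boxtimes\sigma\bigr),\qquad \Omega:=\Theta_{\psi,\mathbb{W}}(\mathbf{1}),
\]
the big theta lift to $\widetilde{\mathbb{G}}$ of the trivial representation $\mathbf{1}$ of $G'$. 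Since $\dim\Hom_{\widetilde G}(\pi\otimes\sigma,\mathbb{C})\le 1$, with equality precisely when $\sigma\cong\pi^{\vee}$, it is enough to bound this last space by $1$ and to show it vanishes unless $\sigma\cong\pi^{\vee}$.

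Next I would identify $\Omega$ by the doubling method. Let $\mathbb{P}$ be the Siegel parabolic of $\mathbb{G}$ stabilising the Lagrangian $V^{\Delta}=\{(v,v):v\in V\}\subseteq\mathbb{V}$, and let $R(s)=\Ind_{\widetilde{\mathbb{P}}}^{\widetilde{\mathbb{G}}}(\chi_{V}\,|\cdot|^{s})$ be the associated degenerate principal series, $\chi_{V}$ a character built from $\psi$ and $\disc V$. The hypothesis $\mathrm{size}(G)\ge\mathrm{size}(G')$ enters exactly here: after doubling, $\mathbb{G}$ is large enough relative to $G'$ that the doubling--see-saw machinery exhibits $\Omega$ as a quotient of $R(s_{0})$, where $s_{0}$ is the point determined by $\dim_{D}V'$. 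Hence it suffices to bound $\dim\Hom_{\widetilde G\times\widetilde G}(R(s_{0})|_{\widetilde G\times\widetilde G},\pi\boxtimes\sigma)$.

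I would then restrict $R(s_{0})$ to the symmetric subgroup $\widetilde G\times\widetilde G$ via the geometric lemma. The double cosets $\mathbb{P}\backslash\mathbb{G}/(G\times G)$ are finite, indexed by $d=\dim(X\cap V)$ for $X$ a Lagrangian of $\mathbb{V}$; the open one is $d=0$, whose stabiliser in $G\times G$ is a twisted diagonal $G^{\nabla}\cong G$. Thus $R(s_{0})|_{\widetilde G\times\widetilde G}$ carries a finite filtration whose subquotients are the open-orbit term --- the representation compactly induced from $\widetilde G^{\nabla}$ of a character $\mu$ --- together with terms parabolically induced from proper parabolics $\mathbb{Q}_{d}$ ($1\le d\le n$) of $G\times G$, with inducing data of the shape $(\chi_{d}\boxtimes\chi_{d})\boxtimes\Omega_{n-d}$, where $\chi_{d}$ is a character of a $\GL_{d}(D)$-factor and $\Omega_{n-d}$ is the analogue of $\Omega$ for the smaller pair $(\U(V_{n-d}),G')$. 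Consequently $\dim\Hom_{\widetilde G\times\widetilde G}(R(s_{0})|_{\widetilde G\times\widetilde G},\pi\boxtimes\sigma)$ is bounded by the sum of the $\Hom$-contributions of these subquotients. The open-orbit term contributes, by Frobenius reciprocity, $\dim\Hom_{\widetilde G}(\mu,(\pi\otimes\sigma)|_{\widetilde G^{\nabla}})$; a direct computation of the relevant modulus characters shows $\mu$ is trivial, so this equals $\dim\Hom_{\widetilde G}(\pi\otimes\sigma,\mathbb{C})$ --- the right-hand side of \eqref{dimleq10}.

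It thus remains to show that each term with $d\ge 1$ contributes $0$ to $\Hom(-,\pi\boxtimes\sigma)$, and this is the main obstacle. By Bernstein's second adjunction it reduces to the vanishing of
\[
  \Hom_{\GL_{d}(D)}(\chi_{d},\,\cdot\,)\otimes\Hom_{\GL_{d}(D)}(\chi_{d},\,\cdot\,)\otimes\Hom_{\widetilde{\U(V_{n-d})}\times\widetilde{\U(V_{n-d})}}\!\bigl(\Omega_{n-d},\,\cdot\,\bigr)
\]
evaluated on the Jacquet modules $r_{\bar{\mathbb{Q}}_{d}}(\pi\boxtimes\sigma)=r_{\bar{P}_{d}}(\pi)\boxtimes r_{\bar{P}_{d}}(\sigma)$, with $\bar{P}_{d}$ the parabolic of $\U(V)$ of Levi $\GL_{d}(D)\times\U(V_{n-d})$. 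Running the see-saw of the first paragraph in reverse, the third factor is, for the smaller pair $(\U(V_{n-d}),G')$, an analogue of the quantity bounded in the first paragraph, so I would argue by induction on $\mathrm{size}(G)$. Kudla's filtration of the Weil representation restricted to a maximal parabolic of $\widetilde G$ computes $r_{\bar{P}_{d}}(\pi)$ through theta lifts attached to sub-dual-pairs, which feeds the induction, while the value of $s_{0}$ constrains the exponents of the characters $\chi_{d}$; combined with the induction hypothesis applied to the $\Omega_{n-d}$-factor, the cuspidal-support structure of the irreducible $\pi$, and --- where needed --- the non-vanishing criterion of Lemma \ref{E:nonvan} (which in particular lets one assume $\theta_\psi(\pi)$ and $\theta_{\bar\psi}(\sigma)$ non-zero), this should force the lower contributions to vanish. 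Treating separately the case where $\pi$ is supercuspidal (there one expects outright vanishing) and the general case (reduced to it by Kudla's filtration and the induction hypothesis) then completes the proof; combining with the three preceding steps yields \eqref{dimleq10}.
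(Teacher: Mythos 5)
Your overall strategy — pass to big theta lifts, run the doubling see-saw to reduce to $\Hom_{\widetilde G\times\widetilde G}(R(s_0)|_{\widetilde G\times\widetilde G},\pi\boxtimes\sigma)$, filter $R(s_0)$ by $G\times G$-orbits on the Lagrangian Grassmannian, and identify the open-orbit piece with the regular representation $C_c^\infty(\rG(U))$, hence with $\Hom_{\widetilde G}(\pi\otimes\sigma,\CC)$ — is the same as the paper's, and is carried out correctly through the open-orbit step. The paper's Lemma \ref{ry} is your ``$\Omega$ is a quotient of $R(s_0)$'' (Rallis's embedding plus Yamana's surjectivity for $s_{U,V}>0$), and its Lemma \ref{L:key0} is your orbit filtration.

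The gap is in your treatment of the closed orbits, i.e.\ the claim that ``each term with $d\ge 1$ contributes $0$.'' In the paper, the vanishing of the $t>0$ contributions is precisely the defining hypothesis of the \emph{non-boundary} case (Proposition \ref{P:nonb}): when $\pi\boxtimes\sigma$ does not lie on the boundary of $\mathrm I(s_{U,V})$, the desired inequality follows exactly as you describe. But the whole difficulty is the complementary case, where $\Hom_{\rG(U)\times\rG(U^-)}(R_t(s_{U,V}),\pi\boxtimes\sigma)\neq 0$ for some $t>0$; there the boundary terms do \emph{not} vanish, and no amount of induction or appeal to Lemma \ref{E:nonvan} will make them vanish. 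The paper does not show these terms vanish; it abandons the $\mathrm I(s_{U,V})$-filtration in this case and argues differently. Concretely (Section 5), being on the boundary forces $\mathrm m_\eta(\pi)=a>0$ for a suitable character $\eta$; the key structural result (Proposition \ref{P:induced}) produces a canonical irreducible ``derivative'' $\pi_\eta\in\Irr(\rG(U_a))$, with good behavior under contragredient, and the compatibility of theta with parabolic induction (Proposition \ref{induction}, via Kudla's filtration of the Weil representation) transports the whole problem to the strictly smaller pair $(U_a,V_a)$. The induction hypothesis (i.e.\ Theorem \ref{T:howe3} for $(\rG(U_a),\rG(V_a))$) then shows the left side of \eqref{dimleq10} is exactly $1$ and simultaneously that $\sigma\cong\pi^\vee$, so the right side is also $1$. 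Your sketch gestures at some of the right tools (Kudla's filtration, induction on size), but it is built on the false premise that the boundary $\Hom$-spaces vanish, and it omits the two propositions that actually carry the boundary case: the definition and properties of $\pi\mapsto\pi_\eta$ (Proposition \ref{P:induced}) and the matching $\mathrm m_\eta(\pi)=\mathrm m_{\eta'}(\pi')$ with the accompanying embedding of $\Hom$-spaces (Proposition \ref{induction}). Your proposed reduction to the supercuspidal case is also not what happens; $\pi_\eta$ need not be supercuspidal, and the induction is on $\dim U$ rather than on depth of cuspidal support.
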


\vskip 5pt
In what follows, we show that Lemma \ref{E:nonvan} and Proposition \ref{T:mainq00}  imply Theorem \ref{T:howe3}.
\vskip 5pt

\noindent{\bf \underline{Proof of  (Lemma \ref{E:nonvan} $+$ Proposition \ref{T:mainq00} $\Longrightarrow$ Theorem \ref{T:howe3})}}
\vskip 5pt

 For every $\pi\in \Irr (\widetilde G)$ and $\pi'\in \Irr (\widetilde G')$, write
\[
  \mathrm m_\psi(\pi, \pi'):=\dim \Hom_{\widetilde G\times \widetilde G'}(\omega_\psi, \pi\boxtimes \pi')
\]
and define $\mathrm m_{\bar \psi}(\pi, \pi')$ similarly.
 We claim that
  \begin{equation}\label{dual00}
    \mathrm m_\psi(\pi, \pi')\neq 0\quad\textrm{if and only if} \quad \mathrm m_{\bar \psi}(\pi^\vee, {\pi'}^\vee)\neq 0.
 \end{equation}
 It is easy to see that \eqref{dimleq10} and \eqref{dual00} imply  Theorem \ref{T:howe3}.

To prove the claim, we first assume that  $\mathrm m_\psi(\pi, \pi')\neq 0$. Applying Lemma \ref{E:nonvan} to the pair $(G', G)$, we see that $\mathrm m_{\bar \psi}(\sigma, {\pi'}^\vee)\neq 0$ for some $\sigma\in \Irr(\widetilde G)$. The inequality \eqref{dimleq10}  then  implies that $\sigma\cong \pi^\vee$ and hence $\mathrm m_{\bar \psi}(\pi^\vee, {\pi'}^\vee)\neq 0$. Similarly, if $\mathrm m_{\bar \psi}(\pi^\vee, \sigma^\vee)\neq 0$ then $\mathrm m_\psi(\pi, \sigma)\neq 0$.  This proves the claim \eqref{dual00}, and therefore shows that Lemma \ref{E:nonvan} and Proposition \ref{T:mainq00}  imply  Theorem \ref{T:howe3}.
 \vskip 15pt

In view of the above, the main body of our paper will be devoted to the proof of Proposition \ref{T:mainq00}.
\vskip 10pt

\noindent {\bf Remarks}: (a) Reductive dual pairs as defined in this paper include the following case: $G$ is the quaternionic orthogonal group attached to a one-dimensional quaternionic skew Hermitian space, and $G'$ is the quaternionic symplectic group attached to a non-zero quaternionic Hermitian space (see the next section). In this case, $G'$ is strictly contained in the centralizer of $G$ in the symplectic group.


(b)
 Although in the statements of \cite[Theorems 1.3 and 1.4]{Ya} (see Lemma \ref{ry}) and \cite[Equalities (12)]{sz}, the authors assume that the base field $\rF$ has characteristic zero, their methods prove the same results  for all non-archimedean local field $\rF$ of characteristic not $2$.

(c)
For type II irreducible reductive dual pairs,  the identity (\ref{E:identity})  is a consequence of \cite[Theorem 1]{mi}, in which the explicit theta lifts are determined in terms of the Langlands parameters. For orthogonal-symplectic and unitary dual pairs,  (\ref{E:identity})  is a consequence of the MVW involution (\emph{cf.} \cite[Theorem 1.4]{S}).
 \vskip 15pt
\begin{center}{\bf Acknowledgements}\end{center}

 This paper is essentially completed during the conference in honor of Professor Roger Howe on the occasion of his 70th birthday. We thank the organisers of the conference
(James Cogdell, Ju-Lee Kim, Jian-Shu Li, David Manderscheid, Gregory Margulis, Cheng-Bo Zhu and Gregg Zuckerman) for their kind invitation to speak at the conference and for providing local support.

During the Howe conference, the first author presented his paper \cite{gt} with S. Takeda on the proof of the Howe duality conjecture for orthogonal-symplectic and unitary dual pairs and mentioned that the quaternionic case still needed to be addressed because of the lack of the MVW involution.  He expressed the hope that some trick could be found by the end of the conference to deal with the quaternionic case. The following day, the second author realised that a consequence of the conservation relation shown in his paper \cite{sz} with C.-B. Zhu could serve as a replacement for  the MVW-involution: this is the innocuous-looking statement  \eqref{E:nonvan0} above.  The two authors
were able to  verify the details in the next two days, thus completing the proof of the Howe duality conjecture in the quaternionic case.  It gives us great pleasure to dedicate this  paper to Roger Howe, who had initiated this whole area of research and formulated this conjecture at the beginning of his career. We hope that it gives him much satisfaction in seeing this conjecture completely resolved at the time of his retirement from Yale.


 W.T. Gan is partially supported by an MOE Tier Two grant R-146-000-175-112. B. Sun
is supported in part by the NSFC Grants 11222101 and 11321101.

\vskip 15pt

 \section{\bf The doubling method}\label{secdouble}
We will only treat the quaternionic case in the  proof of Proposition \ref{T:mainq00}, since it is previously known in all other cases.
Let $\rF$ be a nonarchimedean local field of characteristic not $2$, with $|\,\cdot\,|_\rF$ denoting the normalized absolute value on $\rF$.
 Let $\rD$ be a central division quaternion algebra over $\rF$, which is unique up to isomorphism. Denote by $\iota: \rD\rightarrow \rD$ the quaternion conjugation of $\rD$. We consider an $\epsilon$-Hermitian right $\rD$-vector space $U$, and an $\epsilon'$-Hermitian left $\rD$-vector space $V$, where $\epsilon=\pm 1$ and $\epsilon'=-\epsilon$. To be precise, $U$ is a finite dimensional right
$\mathrm D$-vector space, equipped with a non-degenerate $\mathrm
F$-bilinear map
\[
  \la\,,\,\ra_U : U\times U\rightarrow \mathrm D
\]
satisfying
\[
  \la u,u'\alpha \ra_U=\la u,u'\ra_U\, \alpha\quad\textrm{ and }\quad \la u,u'\ra_U=\epsilon \la u',u\ra_U^\iota, \qquad u,u'\in U, \,\alpha\in \mathrm D.
\]
Similarly, $V$ is a finite dimensional left $\rD$-vector space and is equipped with a form $\la \,,\,\ra_V:V\times V\rightarrow \rD$ with the analogous properties. The tensor product  $W:=U\otimes_\rD V$ is a symplectic space over $\rF$ under the bilinear form
 \begin{equation}
\label{tensor-form}
  \la u\otimes v, u'\otimes v'\ra_W:=\frac{\la u,u'\ra_U \,\la v,v'\ra_V^\iota+\la
  v,v'\ra_V\,\la u,u'\ra_U^\iota }{2},\quad u,u'\in U,\,v, v'\in V.
\end{equation}

Throughout the paper, we fix two quadratic (order at most $2$) characters $\chi_U, \chi_V :\rF^\times \rightarrow \{\pm 1\}$ determined by the discriminants of $U$ and $V$ respectively. More precisely, we have:
 \[
  \chi_V(\alpha)=\left( (-1)^{\dim V} \prod_{i=1}^{\dim V}\la e_i, e_i\ra_V \la e_i, e_i\ra_V^\iota, \alpha\right)_\rF,\qquad \alpha \in \rF^\times,
\]
where $e_1, e_2, \cdots, e_{\dim V}$ is an orthogonal basis of $V$, and $(\,,\,)_\rF$ denotes the quadratic Hilbert symbol for $\rF$.
Likewise,  one has the analogous definition for $\chi_U$. Note that if $\epsilon =1$, then the isometry class of $U$ is determined by its dimension, and $\chi_U$ only depends on the parity of $\dim U$; likewise, if $\epsilon' =1$, $\chi_V$ only depends on the parity of $\dim V$.
\vskip 5pt

Denote by $W^-$ the space $W$ equipped with the form scaled by $-1$. Write $W^\square:=W\oplus W^-$ for the orthogonal direct sum, which contains $W^\triangle:=\{(u,u)\in W^\square\mid u\in W\}$ as a Lagrangian subspace. Define  $U^-, V^-, U^\square, V^\square, U^\triangle, V^\triangle$ similarly.
Then we have obvious identifications of symplectic spaces
\[
  W^-=U^-\otimes_\rD V=U\otimes_\rD V^- \quad \textrm{and}\quad W^\square=U^\square\otimes_\rD V=U\otimes_\rD V^\square.
\]
Let  $\rG(U)$ denote the isometry group of $U$, and similarly for other groups.
Then we  have identifications
\[
  \rG(U)=\rG(U^-)\quad \textrm{ and }\quad \rG(V)=\rG(V^-),
\]
 and inclusions
\[
 \rG(U)\times \rG(U^-)\subset \rG(U^\square) \quad\textrm{ and }\quad \rG(V)\times \rG(V^-) \subset  \rG(V^\square).
\]
 Denote by  $\rP(U^\triangle)$ the parabolic subgroup of $\rG(U^\square)$ stabilizing $U^\triangle$. Likewise, denote by  $\rP(V^\triangle)$ the parabolic subgroup of $\rG(V^\square)$ stabilizing $V^\triangle$.

Let $\omega$ and $\omega^-$ be irreducible admissible smooth representations of $\rH(W)$ and $\rH(W^-)$, respectively, both with central character $\psi$.
Then the representation $\omega^\square:=\omega\boxtimes \omega^-$ of $\rH(W)\times \rH(W^-)$ descends to a representation of $\rH(W^\square)$ through the surjective homomorphism
\[
 \rH(W)\times\rH(W^-)\rightarrow \rH(W^\square),\quad ((u,\alpha), (v,\beta))\mapsto ((u,v), \alpha+\beta).
\]
This representation of $\rH(W^\square)$ uniquely extends to the group $\mathrm G(U^\square)\ltimes \mathrm H(W^\square)$ such that (\emph{cf.} \cite[Theorem 4.7]{sz})
\begin{equation}\label{lambdad}
    \lambda_\triangle (g\cdot \phi)=\chi_V({{\det}}(g|_{U^\triangle})) \, \abs{\det(g|_{U^\triangle})}_\rF^{\dim V}  \, \lambda_\triangle (\phi), \qquad \phi\in \omega^\square,
\ g\in \rP(U^\triangle),
\end{equation}
where $\lambda_\triangle$ denotes the unique (up to scalar multiplication) non-zero $W^\triangle$-invariant linear functional on $ \omega^\square$ and $\det$ denotes the reduced norm. Similarly, this representation of $\rH(W^\square)$ uniquely extends to the group $\mathrm G(V^\square)\ltimes \mathrm H(W^\square)$ such that
\[
    \lambda_\triangle (g'\cdot\phi)=\chi_U(\det(g'|_{V^\triangle})) \, \abs{\det(g'|_{V^\triangle})}_\rF^{\dim U}  \, \lambda_\triangle (\phi), \quad \phi\in \omega^\square,
\quad g'\in \rP(V^\triangle).
\]
We extend the representation $\omega$  to $(\rG(U)\times \rG(V))\ltimes \rH(W)$ and extend the representation $\omega^-$ to $(\rG(U^-)\times \rG(V^-))\ltimes \rH(W^-)$ such that
\[
  ((g,g')\cdot \phi)\otimes ((h,h')\cdot \phi^-)=gh\cdot(g'h'\cdot (\phi\otimes \phi^-))=g'h'\cdot (gh\cdot (\phi\otimes \phi^-)),
\]
 for all  $(g,h)\in \rG(U)\times \rG(U^-)$, $(g',h')\in \rG(V)\times \rG(V^-)$, $\phi\in\omega$ and $\phi^-\in \omega^-$. Then $\omega$ and $\omega^-$ are contragredient to each other with respect to the isomorphism
\[
  (\rG(U)\times \rG(V))\ltimes \rH(W)\rightarrow (\rG(U^-)\times \rG(V^-))\ltimes \rH(W^-), \quad ((g,g'),(u, \alpha))\mapsto ((g,g'),(u, -\alpha)).
\]
 If necessary, we also write $\omega_{U,V,\psi}$ for the representation $\omega$ of $(\rG(U)\times \rG(V))\ltimes \rH(W)$, and write $\omega^-_{U,V,\psi}$ for the representation $\omega^-$ of $(\rG(U^-)\times \rG(V^-))\ltimes \rH(W^-)$, to emphasize their dependence on $U,V$ and $\psi$.
 \vskip 5pt

 Thus, we have defined a splitting of (the pushout via $\{\pm 1\} \hookrightarrow \CC^\times$ of) the metaplectic cover $\widetilde{\rG}(U)$ and $\widetilde{\rG}(V)$ over $\rG(U)$ and $\rG(V)$ respectively, so that the Weil representation $\omega_{U,V,\psi}$ is a representation of the linear group $\rG(U) \times \rG(V)$. Such a splitting is unique over $\rG(U)$ if $U$ is quaternionic-Hermitian of dimension $>1$, but is not unique if $U$ is quaternionic-skew-Hermitian (as one can twist by quadratic characters of $\rG(U)$). For  the purpose of formulating and proving the Howe duality conjecture, there is no loss of generality in   working with a fixed splitting.
 \vskip 5pt

More precisely,  as in the introduction, for every  $\pi\in \Irr (\rG(U))$, put
\[
  \Theta_{\omega}(\pi):=(\omega\otimes \pi^\vee)_{\rG(U)},
\]
and define the theta lift $\theta_{\omega}(\pi)$ to be  the maximal semisimple quotient of $\Theta_{\omega}(\pi)$. Similarly, the theta lift $\theta_{\omega}(\pi')$ is defined for all $\pi'\in \Irr(\rG(V))$.
 The theta lifts with respect to other oscillator representations, such as $\theta_{\omega^-}$, are analogously defined.

Put
\[
  s_{U,V}:=\left(\dim U+\frac{\epsilon}{4}\right)-\left(\dim V+\frac{\epsilon'}{4}\right)\quad\textrm{and}\quad s_{V,U}:=\left(\dim V+\frac{\epsilon'}{4}\right)-\left(\dim U+\frac{\epsilon}{4}\right) = -s_{U,V}.
\]
The following is a reformulation of Proposition \ref{T:mainq00} in the quaternionic case, using the notations introduced above.

 \begin{prop} \label{T:mainq2}
 If $s_{U,V}>0$, then  for all $\pi,\sigma\in \Irr(\rG(U))$,
 \begin{equation}\label{dimleq1}
   \dim \Hom_{\rG(V)}(\theta_{\omega}(\pi)\otimes \theta_{\omega^-}(\sigma),\mathbb C)\leq \dim \Hom_{\rG(U)}(\pi\otimes \sigma, \mathbb C).
 \end{equation}
\end{prop}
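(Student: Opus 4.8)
The plan is to carry out the doubling method: convert the left-hand side of \eqref{dimleq1} into a multiplicity on the doubled group $\rG(U^\square)$ by a see-saw, realise the relevant $\rG(U^\square)$-module through the functional $\lambda_\triangle$ of \eqref{lambdad}, and then apply the geometric lemma, using the strict inequality $s_{U,V}>0$ to discard all but the open-cell contribution.

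\smallskip
\emph{Reduction to the doubled group.} Since $\theta_\omega(\pi)$ and $\theta_{\omega^-}(\sigma)$ are quotients of the big theta lifts $\Theta_\omega(\pi)$ and $\Theta_{\omega^-}(\sigma)$, it suffices to bound $\dim\Hom_{\rG(V)}(\Theta_\omega(\pi)\otimes\Theta_{\omega^-}(\sigma),\mathbb C)$. Unwinding the definitions of the coinvariant modules, and using that $\pi$ and $\sigma$ are admissible, this equals $\dim\Hom_{\rG(U)\times\rG(U^-)\times\rG(V)^\triangle}(\omega^\square,\pi\boxtimes\sigma\boxtimes\mathbb C)$, where $\rG(V)^\triangle$ denotes the diagonal copy of $\rG(V)$ in $\rG(V)\times\rG(V^-)$. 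Inside $\Sp(W^\square)$ the subgroup $\rG(V)^\triangle$ is the centraliser of $\rG(U)\times\rG(U^-)$ and $\rG(U^\square)$ is the centraliser of $\rG(V)^\triangle$, so that $\big(\rG(U)\times\rG(U^-),\ \rG(V)\times\rG(V^-)\big)$ and $\big(\rG(U^\square),\rG(V)^\triangle\big)$ form a see-saw pair; collapsing the trivial $\rG(V)^\triangle$-isotype gives
\[
\dim\Hom_{\rG(V)}\big(\Theta_\omega(\pi)\otimes\Theta_{\omega^-}(\sigma),\mathbb C\big)=\dim\Hom_{\rG(U)\times\rG(U^-)}\big(\Theta^{\rG(U^\square)}(\mathbf 1),\,\pi\boxtimes\sigma\big),
\]
where $\Theta^{\rG(U^\square)}(\mathbf 1)=(\omega^\square)_{\rG(V)^\triangle}$ is the big theta lift to $\rG(U^\square)$ of the trivial representation of $\rG(V)^\triangle$.

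\smallskip
\emph{Degenerate principal series and the geometric lemma.} The functional $\lambda_\triangle$ of \eqref{lambdad} is $W^\triangle$-invariant; moreover $\rG(V)^\triangle$ lies in a Levi factor of $\rP(V^\triangle)$ and, the quaternionic group $\rG(V)$ being connected so that the reduced norm is trivial on it, the $\rP(V^\triangle)$-character occurring in the analogue of \eqref{lambdad} for the $V$-side restricts trivially to $\rG(V)^\triangle$. Hence $\lambda_\triangle$ descends to $\Theta^{\rG(U^\square)}(\mathbf 1)$ and, by \eqref{lambdad} and Frobenius reciprocity, yields a $\rG(U^\square)$-equivariant map from $\Theta^{\rG(U^\square)}(\mathbf 1)$ into a degenerate principal series $I=\Ind_{\rP(U^\triangle)}^{\rG(U^\square)}\big(\chi_V|\det|_\rF^{\,s}\big)$, a subrepresentation of $I$ by Yamana's theorem (Lemma \ref{ry}) since $s_{U,V}>0$; this pins down the exponent $s$ (in terms of $\dim V$) governing the orbit filtration below. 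That filtration---produced by the geometric lemma (Lemma \ref{L:geometric}), equivalently by Kudla's filtration of the Weil representation---decomposes $\Theta^{\rG(U^\square)}(\mathbf 1)\big|_{\rG(U)\times\rG(U^-)}$ into finitely many graded pieces indexed by the $\rG(U)\times\rG(U^-)$-orbits of Lagrangians $L\subset U^\square$, equivalently by the integer $r=\dim_\rD(L\cap U)=\dim_\rD(L\cap U^-)\ge0$: the base point $U^\triangle$ is the unique open orbit $(r=0)$, with stabiliser $\rG(U)^\triangle$, contributing $\mathrm{ind}_{\rG(U)^\triangle}^{\rG(U)\times\rG(U^-)}(\mathbb C)$, and each $r\ge1$ contributes a representation parabolically induced from $P_r(U)\times P_r(U^-)$ carrying a $\GL_r(\rD)$-datum of the form $\chi_V|\det|_\rF^{\,s+c_r}$ (for an explicit shift $c_r$) together with the corresponding datum for the smaller pair $(U_{\mathrm{mid}},V)$, $\dim U_{\mathrm{mid}}=\dim U-2r$.

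\smallskip
\emph{The open cell, and the main obstacle.} By Frobenius reciprocity the $r=0$ piece contributes $\dim\Hom_{\rG(U)^\triangle}\big(\mathbb C,\pi\boxtimes\sigma\big)=\dim\Hom_{\rG(U)}(\pi\otimes\sigma,\mathbb C)$, which is exactly the bound sought in \eqref{dimleq1}; so by subadditivity of $\dim\Hom(\,\cdot\,,\pi\boxtimes\sigma)$ along the filtration it remains to prove that every $r\ge1$ graded piece contributes $0$. By Bernstein's second adjunction the $r$-th contribution unwinds into a finite sum of products of two $\GL_r(\rD)$-$\Hom$-spaces---testing the exponent $s+c_r$ against the $\GL_r(\rD)$-parts of the normalised Jacquet modules $(\pi)_{P_r(U)}$ and $(\sigma)_{P_r(U^-)}$---with a residual $\Hom$-space over $\rG(U_{\mathrm{mid}})\times\rG(U_{\mathrm{mid}})$ which, by the first step applied to $(U_{\mathrm{mid}},V)$, is the analogue of the left-hand side of \eqref{dimleq1} for that pair. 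Showing that no such term survives, for any $r\ge1$, is the crux of the matter: using the explicit values of the shifts $c_r$, the strict positivity $s_{U,V}>0$ is precisely what forces the exponent $s+c_r$ to lie outside the (finitely many) exponents carried by $(\pi)_{P_r(U)}$ and $(\sigma)_{P_r(U^-)}$ whenever the residual factor could be non-zero---the complementary range, in which $\dim U_{\mathrm{mid}}<\dim U$ with $s_{U_{\mathrm{mid}},V}$ again positive, being absorbed by an induction on $\dim U$. Carrying out this compatibility analysis uniformly in $r$, together with pinning down the shifts $c_r$ and Yamana's precise identification of $\Theta^{\rG(U^\square)}(\mathbf 1)$, is the principal technical difficulty. (In the opposite situation $s_{U,V}<0$ one would instead be proving the corresponding statement for the pair $(\rG(V),\rG(U))$, so no boundary case $s_{U,V}=0$ arises; indeed $s_{U,V}$ is always a non-zero half-integer.)
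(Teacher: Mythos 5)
Your reduction via the see-saw and the passage from $(\omega^\square)_{\rG(V)^\triangle}$ to the degenerate principal series $I(s_{U,V})$ is exactly the paper's argument for the \emph{non-boundary} case (Proposition~\ref{P:nonb}): the bottom piece $R_0(s)=C^\infty_c(\rG(U))$ of the Kudla--Rallis filtration of $I(s_{U,V})$ gives the bound $\dim\Hom_{\rG(U)}(\pi\otimes\sigma,\CC)$, and if $\pi\boxtimes\sigma$ does not lie on the boundary, you are done.

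The gap is in your handling of the boundary. You assert that the strict positivity $s_{U,V}>0$ ``forces the exponent $s+c_r$ to lie outside the exponents carried by $(\pi)_{P_r(U)}$ and $(\sigma)_{P_r(U^-)}$ whenever the residual factor could be non-zero,'' with the complementary situation somehow ``absorbed by an induction on $\dim U$.'' This is not correct. The condition $s_{U,V}>0$ does \emph{not} kill the $r\geq 1$ graded pieces: the definition of $\pi\boxtimes\sigma$ lying on the boundary (which is a genuine and occurring case) is precisely that $\Hom_{\rG(U)\times\rG(U^-)}(R_t(s_{U,V}),\pi\boxtimes\sigma)\neq 0$ for some $t>0$, so the exponent test you propose fails by definition in the very case you need to handle. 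And there is no mechanism by which an induction on $\dim U$ makes a non-zero $\Hom$ on a boundary stratum contribute zero to the left-hand side of \eqref{dimleq1}; the subadditivity of $\dim\Hom$ along the filtration only gives an upper bound by the sum of contributions, and you cannot cancel a positive term.

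What the paper actually does in the boundary case is of a different nature and is the substantive part of the argument. One fixes $t>0$ with $\Hom(R_t(s_{U,V}),\pi\boxtimes\sigma)\neq 0$; this produces (after dualizing via the second adjointness theorem) a specific exponent $\eta=\chi_V|\cdot|^{s_{V,U}-2t+1}$ in $\mathrm R_{X_t^*}(\pi)$ and thereby a strictly positive invariant $a=\mathrm m_\eta(\pi)$. The argument then abandons the filtration of $I(s_{U,V})$ and turns to Kudla's filtration of the Weil representation $\omega$ itself (Lemma~\ref{L:kudla}): Proposition~\ref{induction} shows that theta correspondence is compatible with the functor $\pi\mapsto\pi_\eta$, carrying the problem down to the smaller dual pair $(\rG(U_a),\rG(V_a))$. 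One then invokes the induction hypothesis \emph{in the form of Theorem~\ref{T:howe3} for that smaller pair} (irreducibility of $\theta$, injectivity on its domain, and the duality $(\theta_\psi(\cdot))^\vee\cong\theta_{\bar\psi}((\cdot)^\vee)$), which uses Lemma~\ref{E:nonvan} from the conservation relations, to show directly that the left-hand side of \eqref{dimleq1} is $1$ and that $\sigma\cong\pi^\vee$, so both sides are equal to $1$. Also, the foundations for the functor $\pi\mapsto\pi_\eta$ (uniqueness of $\pi_\eta$, the splitting of the Jacquet module, and the duality $(\pi^\vee)_\eta\cong(\pi_\eta)^\vee$, which is Proposition~\ref{P:induced}) require the homological input of Lemma~\ref{L:ext}/\ref{extgg} together with $\eta^2\neq 1$; none of this appears in your sketch. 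In short, your open-cell computation matches the paper's Proposition~\ref{P:nonb}, but the boundary case --- which is the main content of the proof --- requires the $\eta$-Jacquet-module machinery of Sections 3--4 and the inductive use of the full Theorem~\ref{T:howe3}, not an exponent-vanishing argument on the strata of $I(s_{U,V})$.
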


The linear functional $\lambda_\triangle$ of \eqref{lambdad} induces a $\rG(U^\square)$-intertwining linear map
\begin{equation}\label{rallisq}
    \omega^\square\rightarrow \mathrm I(s_{V,U}), \quad \phi\mapsto (g\mapsto \lambda_\triangle(g\cdot \phi)).
\end{equation}
Here for each $s\in \mathbb C$,
 \[
 \mathrm I(s):=\mathrm{Ind}_{\mathrm P(U^\triangle)}^{\rG(U^\square)} \, \left(\chi_V\, |{\det}_{U^\triangle}|_\rF^s\right),
 \]
 where ${\det}_{U^\triangle}: \GL(U^\triangle)\rightarrow \rF^\times$ denotes the reduced norm map, and $\chi_V$ is viewed as a character of $\GL(U^\triangle)$ via the pullback through this map. Throughout this paper, $\mathrm{Ind}$ will denote the normalised parabolic induction functor.

Denote by $\rG(V)^\triangle$ the group $\rG(V)$ diagonally embedded in $\rG(V)\times \rG(V^-)$, to be viewed as a subgroup of $\rG(V^\square)$.

\begin{lem}\label{ry}
The linear map \eqref{rallisq} induces a $\rG(U^\square)$-intertwining linear embedding
\[
(\omega^\square)_{\rG(V)^\triangle}\hookrightarrow \mathrm I(s_{V,U}).
\]
If $s_{U,V}>0$, then there exists a surjective $\rG(U^\square)$-intertwining linear map
\[
\mathrm I(s_{U,V})\twoheadrightarrow(\omega^\square)_{\rG(V)^\triangle} \subset I(s_{V,U}).
\]
\end{lem}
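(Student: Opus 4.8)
The plan is to realize both halves of Lemma~\ref{ry} as instances of the functional-equation/doubling machinery attached to the degenerate principal series $\mathrm I(s)$, leveraging the fact that $(\omega^\square)_{\rG(V)^\triangle}$ is a single $\rG(U^\square)$-module sitting between the two induced representations $\mathrm I(s_{V,U})$ and $\mathrm I(s_{U,V})$ associated to the mirror-image exponents $\pm s_{U,V}$. For the first (embedding) statement, I would first verify directly from \eqref{lambdad} and the analogous transformation law for $\rP(V^\triangle)$ that $\lambda_\triangle$ is $\rG(V)^\triangle$-invariant: the relevant determinant $\det(g'|_{V^\triangle})$ for $g'$ in the diagonally embedded $\rG(V)$ equals the reduced norm of $g'$ acting on $V$, which is $1$ since $g'$ is an isometry, so both the character $\chi_U$ and the absolute-value factor are trivial on $\rG(V)^\triangle$. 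Hence the map \eqref{rallisq}, $\phi \mapsto (g \mapsto \lambda_\triangle(g\cdot\phi))$, factors through the coinvariants $(\omega^\square)_{\rG(V)^\triangle}$. Injectivity of the induced map is the substantive point: one must show that if $\lambda_\triangle(g\cdot\phi) = 0$ for all $g \in \rG(U^\square)$, then $\phi$ lies in the span of vectors of the form $\phi_0 - g'\cdot\phi_0$ with $g' \in \rG(V)^\triangle$. This is exactly the content of \cite[Theorem 1.3]{Ya} (cited as Lemma~\ref{ry}'s source), whose proof realizes $(\omega^\square)_{\rG(V)^\triangle}$ as a submodule of $\mathrm I(s_{V,U})$ via the orbit structure of $\rG(U^\square)$ acting on the "doubled" model; I would cite this and, following Remark~(b), note that the characteristic-zero hypothesis there is inessential.

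For the second (surjection) statement, the strategy is to exploit the standard intertwining operator $M(s): \mathrm I(s) \to \mathrm I(-s)$ and the location of the point $s = s_{U,V} > 0$ relative to the reducibility points of the degenerate principal series. The image of \eqref{rallisq} is the submodule $(\omega^\square)_{\rG(V)^\triangle} \subset \mathrm I(s_{V,U}) = \mathrm I(-s_{U,V})$. When $s_{U,V} > 0$, this is the point of "second reducibility" on the positive side, and the relevant fact — again from \cite[Theorem 1.4]{Ya} — is that the intertwining operator $M(s_{U,V}): \mathrm I(s_{U,V}) \to \mathrm I(-s_{U,V})$ is holomorphic at $s = s_{U,V}$ with image precisely equal to this submodule $(\omega^\square)_{\rG(V)^\triangle}$, or that the relevant subquotient of $\mathrm I(s_{U,V})$ surjects onto it. Concretely I would: (i) recall the explicit list of composition factors of $\mathrm I(s)$ at $s = s_{U,V}$ in the quaternionic case (the "big cell" giving an open orbit, with the Weil-representation piece appearing as a quotient or sub depending on the sign of the exponent); (ii) identify $(\omega^\square)_{\rG(V)^\triangle}$ with one of these factors by comparing Jacquet modules or by a uniqueness-of-invariant-functional argument; (iii) conclude that the natural projection $\mathrm I(s_{U,V}) \twoheadrightarrow$ (that factor) provides the desired surjection, whose target is then identified with the submodule of $\mathrm I(s_{V,U})$ via the embedding already established.

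**The main obstacle** I anticipate is step (ii): matching the abstract subquotient of $\mathrm I(s_{U,V})$ with the concrete module $(\omega^\square)_{\rG(V)^\triangle}$ requires knowing that the degenerate principal series at $s = s_{U,V}$ has a unique irreducible submodule-or-quotient whose contribution to the appropriate Jacquet module agrees with that of the Weil-representation coinvariants. This is where the sign condition $s_{U,V} > 0$ (equivalently, $\dim U$ suitably larger than $\dim V$, i.e. $\rG(U)$ "bigger") enters decisively — it guarantees we are in the stable range where $(\omega^\square)_{\rG(V)^\triangle}$ is the full induced representation's canonical quotient rather than a proper piece, so that the surjection from $\mathrm I(s_{U,V})$ exists in the stated form. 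Since this is precisely the reformulation of \cite[Theorems 1.3 and 1.4]{Ya}, the cleanest route is to quote those theorems directly, supplying only the translation between their notation (Hermitian/skew-Hermitian spaces over $\rD$, exponents normalized by $\pm\epsilon/4$) and ours, and verifying the $\rG(V)^\triangle$-invariance of $\lambda_\triangle$ as the one computation not contained in the cited references.
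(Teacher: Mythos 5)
Your proposal takes essentially the same route as the paper: both prove the lemma by citing established results in the literature, and both invoke Yamana's Theorems 1.3 and 1.4 for the surjectivity statement when $s_{U,V}>0$. One small misattribution: the paper credits the embedding (first assertion) to Rallis \cite[Theorem II.1.1]{R} and to \cite[Chapter 3, Theorem IV.7]{mvw}, not to Yamana's Theorem 1.3; the paper cites \cite[Theorems 1.3 and 1.4]{Ya} jointly for the second assertion only. Also, your justification of the $\rG(V)^\triangle$-invariance of $\lambda_\triangle$ --- ``the reduced norm $\det(g'|_{V^\triangle})$ is $1$ since $g'$ is an isometry'' --- is a bit quick: from $(g')^\tau g' = 1$ one only gets $\det(g')^2 = 1$, so one still needs to know that the reduced norm is actually trivial (not merely $\pm 1$) on the quaternionic isometry groups in question, after which both $\chi_U(\det(g'|_{V^\triangle}))$ and $|\det(g'|_{V^\triangle})|_\rF$ are manifestly $1$. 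This invariance is, in any case, part of the content of the Rallis/MVW results the paper cites, so the overall argument goes through.
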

\begin{proof}
The first assertion is due to Rallis, see \cite[Theorem II.1.1]{R} and \cite[Chapter 3, Theorem IV.7]{mvw}. The second one is proved in \cite[Theorems 1.3 and 1.4]{Ya}.
\end{proof}

Write $q_U$ for the Witt index of $U$. Fix two sequences
\begin{equation}\label{flagu0}
  0=X_0\subset X_1\subset \cdots \subset X_{q_U} \qquad \textrm{and}\qquad  X_{q_U}^*\supset \cdots \supset X_1^*\supset X_0^*=0
\end{equation}
of totally isotropic subspaces of $U$ such that for all $t=0,1,\cdots, q_U$,
\begin{equation}\label{flagu}
\left\{
  \begin{array}{ll}
    \dim X_t=\dim X_t^*=t;  \\
    X_t\cap X_t^*=0;  \ \textrm{ and}\\
    X_t\oplus X_t^*\textrm{ is non-degenerate}.
  \end{array}
\right.
\end{equation}
Denote by $U_t$ the orthogonal complement of $X_t\oplus X_t^*$ in $U$. Write $\mathrm P(X_t)$ and $\mathrm P(X_t^*)$ for the parabolic subgroups of $\rG(U)$ stabilizing $X_t$ and $X_t^*$, respectively. Then
\[
   \mathrm P(X_t)\cap \mathrm P(X_t^*)=\GL(X_t)\times \rG(U_t)
\]
is a common Levi factor of $\mathrm P(X_t)$ and $\mathrm P(X_t^*)$.

We need the following lemma (see \cite[Section 1]{kr05}).
\begin{lem} \label{L:key0}
Let $s\in \mathbb C$. As a representation of $\rG(U) \times
\rG(U^-)$, $\mathrm I(s)$
possesses an equivariant filtration
\[ 0=I_{-1}(s) \subset I_0(s) \subset  I_1(s) \subset\cdots\subset  I_{q_U}(s) =\mathrm I(s)  \]
with successive quotients
\[
   R_t(s) =I_t(s) /  I_{t-1}(s)   =     {\rm Ind}_{{\rm P}(X_t) \times {\rm P}(X_t)}^{\rG(U) \times
\rG(U^-)} \left( \left(\chi_V |{\det}_{X_{t}}|_\rF^{s + t} \boxtimes
\chi_V |{\det}_{X_t}|_\rF^{s+t} \right) \otimes   C^{\infty}_c(\rG(U_t))  \right), \]
where  $0\leq t\leq q_U$, and
\begin{itemize}
  \item $\det_{X_{t}}:\GL(X_t)\rightarrow \rF^\times$ denotes the reduced norm map, and $\chi_V$ is viewed as a character of $\GL(X_t)$ via the pullback through this map;
\item $\rG(U_t)\times \rG(U_t)$ acts on $C^{\infty}_c(
\rG(U_t))$ by  left-right translation.
\end{itemize}
In particular, ${R}_0(s) =  C^{\infty}_c(\rG(U))$ is the regular representation.
\end{lem}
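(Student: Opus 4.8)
The plan is to realise $\mathrm I(s)$ as the space of smooth sections of a $\rG(U^\square)$-equivariant line bundle $\mathcal L_s$ over the flag variety $\mathcal Y:=\rG(U^\square)/\rP(U^\triangle)$, to identify $\mathcal Y$ with the set of maximal totally isotropic $\rD$-subspaces of $U^\square$ of $\rD$-dimension $\dim U$ (a single $\rG(U^\square)$-orbit, since $U^\square=U\oplus U^-$ is split and Witt's theorem for $\epsilon$-Hermitian $\rD$-spaces applies), and then to run the standard filtration argument for $\rG(U)\times\rG(U^-)$-equivariant sheaves on the $\ell$-space $\mathcal Y$. Here $\mathcal L_s$ is the equivariant bundle attached to the character $\chi_V\,|{\det}_{U^\triangle}|_\rF^{s}\,\delta_{\rP(U^\triangle)}^{1/2}$ of $\rP(U^\triangle)$, the factor $\delta_{\rP(U^\triangle)}^{1/2}$ recording that $\mathrm{Ind}$ is normalised. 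The mechanism to be invoked is: if $\mathcal Y$ is partitioned into finitely many locally closed $\rG(U)\times\rG(U^-)$-invariant strata, then the sections supported on an open union of strata form an invariant subrepresentation, and the successive subquotients are the compactly supported sections over the individual strata, which by Frobenius reciprocity are compact inductions (unnormalised) of the fibres of $\mathcal L_s$ from the stabilisers.

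First I would work out the orbit geometry. I expect exactly $q_U+1$ orbits on $\mathcal Y$, namely $\mathcal O_t:=\{Y\in\mathcal Y:\dim_\rD(Y\cap(U\oplus 0))=t\}$ for $t=0,\dots,q_U$, together with the fact that $\dim_\rD(Y\cap U)=\dim_\rD(Y\cap U^-)$ for every $Y$ (which follows by comparing $Y\cap U$ with the orthogonal complement in $U$ of the projection of $Y$ to $U$). A convenient representative of $\mathcal O_t$ is $Y_t:=(X_t\oplus 0)\oplus(0\oplus X_t^*)\oplus U_t^\triangle$ in the notation of \eqref{flagu0}--\eqref{flagu}, where $U_t^\triangle=\{(u,u):u\in U_t\}\subseteq U_t\oplus U_t^-$; one checks $Y_t$ is maximal totally isotropic, and that every $Y$ with $\dim(Y\cap U)=t$ is $(\rG(U)\times\rG(U^-))$-conjugate to $Y_t$ by a Witt-theorem argument applied to the relative position of the triple $(Y\cap U,\,Y\cap U^-,\,Y)$. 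Since $\dim_\rD(Y\cap U)$ is upper semicontinuous in $Y$, each $Z_t:=\bigcup_{t'\le t}\mathcal O_{t'}$ is open; in particular $\mathcal O_0$ is open (on it both projections $Y\to U$ and $Y\to U^-$ are isomorphisms, identifying $\mathcal O_0$ with $\rG(U)$ acted on by left and right translation) and $\mathcal O_{q_U}$ is closed. Taking $I_t(s)$ to be the sections of $\mathcal L_s$ supported on $Z_t$ then gives the asserted filtration, with $I_{-1}(s)=0$, $I_{q_U}(s)=\mathrm I(s)$, and $R_t(s)=I_t(s)/I_{t-1}(s)$ equal to the compactly supported sections of $\mathcal L_s$ over $\mathcal O_t\cong(\rG(U)\times\rG(U^-))/Q_t$, where $Q_t:=\mathrm{Stab}_{\rG(U)\times\rG(U^-)}(Y_t)$.

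Next I would identify this subquotient with the stated $R_t(s)$. From the shape of $Y_t$ one reads off that $Q_t\subseteq\rP(X_t)\times\rP(X_t^*)$, that $Q_t$ contains the product of the unipotent radicals, and that its image in the Levi $(\GL(X_t)\times\rG(U_t))\times(\GL(X_t^*)\times\rG(U_t))$ is $\GL(X_t)\times\GL(X_t^*)\times\{(b,b):b\in\rG(U_t)\}$. Inducing in stages --- first from the diagonal $\rG(U_t)$ up to $\rG(U_t)\times\rG(U_t)$, which manufactures the regular representation $C^\infty_c(\rG(U_t))$, then from $\rP(X_t)\times\rP(X_t^*)$ up to $\rG(U)\times\rG(U^-)$ --- reduces everything to the restriction to $\GL(X_t)\times\GL(X_t^*)$ of the fibre of $\mathcal L_s$ at $Y_t$, corrected by the modulus factor that turns the unnormalised compact induction into the normalised $\mathrm{Ind}$ from $\rP(X_t)\times\rP(X_t^*)$. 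Computing this character as a combination of $\chi_V\,|{\det}_{U^\triangle}|_\rF^s$ and the relevant half-sums of roots of $\rP(U^\triangle)$ and of $\rP(X_t)$ over $\rD$, and then using Witt's theorem to replace $X_t^*$ by $X_t$ and $\rG(U^-)$ by $\rG(U)$, should yield exactly $\chi_V\,|{\det}_{X_t}|_\rF^{s+t}$ on each factor with no twist in the $\rG(U_t)$-direction, i.e. the formula for $R_t(s)$. For $t=0$ one has $X_0=0$, $U_0=U$, $\rP(X_0)=\rG(U)$, so $R_0(s)=C^\infty_c(\rG(U))$, the regular representation, supported on the open orbit $\mathcal O_0$.

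The hard part will be this last computation: tracking all the normalisation factors --- the $\delta^{1/2}$-twists built into $\mathrm I(s)$ itself and into the normalised induction from $\rP(X_t)\times\rP(X_t)$, the transport of $\chi_V\,|{\det}_{U^\triangle}|_\rF^s$ from the base point of $\mathcal Y$ to $Y_t$, and the modulus of $Q_t\backslash(\rG(U)\times\rG(U^-))$ --- and checking that they combine to produce precisely the shift $s\mapsto s+t$ while leaving the $\rG(U_t)$-factor clean, so that $R_t(s)$ comes out in the stated form. By comparison, the orbit count, the closure relations, and the reduction to compact inductions over the strata should be routine, provided one has Witt's theorem available over the quaternion division algebra $\rD$ (which holds in any residual characteristic).
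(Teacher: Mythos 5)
The paper gives no proof of this lemma, citing only \cite[Section 1]{kr05}; your orbit-geometric argument --- identifying $\rG(U^\square)/\rP(U^\triangle)$ with the space of Lagrangians $Y$ in $U^\square$, stratifying by $t=\dim_\rD(Y\cap U)$ (together with the observation $\dim_\rD(Y\cap U)=\dim_\rD(Y\cap U^-)$ and upper semicontinuity), computing the stabiliser $Q_t$ of $Y_t$, and reading off each subquotient as a compact induction from $Q_t$ --- is precisely the Kudla--Rallis argument behind that citation. The only part you have left as a sketch is the modulus/normalisation bookkeeping that produces the exponent $s+t$ and confirms there is no twist on the $\rG(U_t)$-factor; that is genuinely the fiddly step, but it is routine and your setup (fibre of $\mathcal L_s$ at $Y_t$, transported to the Levi via the Witt element exchanging $X_t$ and $X_t^*$, corrected by $\delta_{\rP(U^\triangle)}^{1/2}$, $\delta_{\rP(X_t)}^{1/2}$, and the modulus of $Q_t\backslash(\rG(U)\times\rG(U^-))$) is the right framework for carrying it out.
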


In view of Lemma \ref{L:key0}, we make the following definition.
\vskip 5pt

\noindent{\bf \underline{Definition}:}
We say that an irreducible admissible smooth representation $\pi \boxtimes \sigma$ of $\rG(U) \times \rG(U^-)$ lies on the boundary of $\rm I(s)$ if
 \[
  \Hom_{\rG(U) \times \rG(U^-)}({R}_t(s),   \pi \boxtimes \sigma) \ne 0 \quad \text{ for some $0 < t  \leq q_U$,}
  \]
where ${R}_t(s)$ is as in Lemma \ref{L:key0}.

\vskip 5pt

Now we have:
\vskip 5pt

\begin{prop} \label{P:nonb}
Proposition \ref{T:mainq2} holds when  $\pi\boxtimes \sigma$ does not lie on the boundary of ${\rm I}(s_{U,V})$.
\end{prop}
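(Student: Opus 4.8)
The plan is to run the doubling argument of Rallis in the form used for the other classical dual pairs: bound the left-hand side of \eqref{dimleq1} by a multiplicity for the doubled Weil representation $\omega^\square$, compare that multiplicity with one for the degenerate principal series $\mathrm I(s_{U,V})$ by means of Lemma \ref{ry}, and then exploit the filtration of Lemma \ref{L:key0} together with the non-boundary hypothesis to push everything down onto the bottom layer $R_0(s_{U,V})$, which is the regular representation of $\rG(U)\times\rG(U^-)$.

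Concretely, I would first replace the theta lifts by the big theta's: since $\Theta_\omega(\pi)\otimes\Theta_{\omega^-}(\sigma)$ surjects onto $\theta_\omega(\pi)\otimes\theta_{\omega^-}(\sigma)$, precomposition gives $\dim\Hom_{\rG(V)}(\theta_\omega(\pi)\otimes\theta_{\omega^-}(\sigma),\mathbb C)\le\dim\Hom_{\rG(V)}(\Theta_\omega(\pi)\otimes\Theta_{\omega^-}(\sigma),\mathbb C)$. Then, writing $\Theta_\omega(\pi)=(\omega\otimes\pi^\vee)_{\rG(U)}$ and $\Theta_{\omega^-}(\sigma)=(\omega^-\otimes\sigma^\vee)_{\rG(U^-)}$, recalling $\omega^\square=\omega\boxtimes\omega^-$, and using that $\rG(V)^\triangle$ commutes with $\rG(U)\times\rG(U^-)$ in their joint action on $\omega^\square$, I would commute the $\rG(U)\times\rG(U^-)$- and $\rG(V)^\triangle$-coinvariant functors past one another to obtain the canonical identification
\[
\Hom_{\rG(V)}\bigl(\Theta_\omega(\pi)\otimes\Theta_{\omega^-}(\sigma),\mathbb C\bigr)\;\cong\;\Hom_{\rG(U)\times\rG(U^-)}\bigl((\omega^\square)_{\rG(V)^\triangle},\,\pi\boxtimes\sigma\bigr),
\]
where the admissibility of $\pi,\sigma$ is what allows one to replace $(\pi^\vee\boxtimes\sigma^\vee)^\vee$ by $\pi\boxtimes\sigma$. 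Note that the pair $\pi\boxtimes\sigma$ appearing on the right is exactly the one in the definition of the boundary of $\mathrm I(s_{U,V})$.

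Next I would feed in the two structural lemmas. Because $s_{U,V}>0$, the second assertion of Lemma \ref{ry} provides a $\rG(U^\square)$-equivariant (hence $\rG(U)\times\rG(U^-)$-equivariant) surjection $\mathrm I(s_{U,V})\twoheadrightarrow(\omega^\square)_{\rG(V)^\triangle}$, whence $\dim\Hom_{\rG(U)\times\rG(U^-)}((\omega^\square)_{\rG(V)^\triangle},\pi\boxtimes\sigma)\le\dim\Hom_{\rG(U)\times\rG(U^-)}(\mathrm I(s_{U,V}),\pi\boxtimes\sigma)$. Now apply the $\rG(U)\times\rG(U^-)$-filtration $0=I_{-1}(s_{U,V})\subset\cdots\subset I_{q_U}(s_{U,V})=\mathrm I(s_{U,V})$ of Lemma \ref{L:key0}. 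The hypothesis that $\pi\boxtimes\sigma$ does not lie on the boundary of $\mathrm I(s_{U,V})$ says exactly that $\Hom_{\rG(U)\times\rG(U^-)}(R_t(s_{U,V}),\pi\boxtimes\sigma)=0$ for all $0<t\le q_U$, so the left-exactness of $\Hom(-,\pi\boxtimes\sigma)$ on $0\to I_{t-1}(s_{U,V})\to I_t(s_{U,V})\to R_t(s_{U,V})\to 0$ yields, by descending induction on $t$, an embedding $\Hom_{\rG(U)\times\rG(U^-)}(\mathrm I(s_{U,V}),\pi\boxtimes\sigma)\hookrightarrow\Hom_{\rG(U)\times\rG(U^-)}(R_0(s_{U,V}),\pi\boxtimes\sigma)$. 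Finally $R_0(s_{U,V})=C^\infty_c(\rG(U))$ is the regular representation of $\rG(U)\times\rG(U^-)$, for which $\dim\Hom_{\rG(U)\times\rG(U^-)}(R_0(s_{U,V}),\pi\boxtimes\sigma)=\dim\Hom_{\rG(U)}(\pi\otimes\sigma,\mathbb C)$ (e.g.\ by Frobenius reciprocity, $C^\infty_c(\rG(U))$ being the smooth compact induction of the trivial character from $\rG(U)$ diagonally embedded in $\rG(U)\times\rG(U^-)$). Concatenating the displayed (in)equalities gives \eqref{dimleq1}.

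This argument is essentially formal once Lemmas \ref{ry} and \ref{L:key0} are in hand, and the non-boundary hypothesis is tailored precisely to kill the layers $R_t$ with $t>0$. The one place I expect to need genuine care is the bookkeeping in the doubling identity of the second paragraph: keeping track of contragredients through the two successive coinvariant functors, and verifying that the reduction from $\theta_\omega,\theta_{\omega^-}$ to $\Theta_\omega,\Theta_{\omega^-}$ weakens the multiplicity only in the favorable direction — together with the (standard) identification of $R_0(s_{U,V})$ with the regular representation and the ensuing appearance of $\dim\Hom_{\rG(U)}(\pi\otimes\sigma,\mathbb C)$.
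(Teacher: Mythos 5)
Your proposal is correct and follows the same route as the paper's proof: restrict from $\theta$ to $\Theta$, apply the see-saw identity (whose derivation you spell out via commuting coinvariant functors), invoke Lemma \ref{ry} to compare with $\mathrm I(s_{U,V})$, and use the filtration of Lemma \ref{L:key0} together with the non-boundary hypothesis to land on $R_0(s_{U,V}) = C^\infty_c(\rG(U))$. The only difference is that you unpack in more detail the steps the paper states tersely (the see-saw identity, the descending induction over the filtration, and the identification of $\Hom(R_0,\pi\boxtimes\sigma)$ with $\Hom_{\rG(U)}(\pi\otimes\sigma,\mathbb C)$).
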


\begin{proof}
 Consider the doubling see-saw
\[
\xymatrix{ \rG(U^\square)\ar@{-}[d]_{}\ar@{-}[dr]&\rG(V)\times
\rG(V^-)\ar@{-}[d]\\ \rG(U)\times \rG(U^-)\ar@{-}[ur]&\rG(V)^{\triangle} \,. }
\]
Given  $\pi,\sigma\in \Irr(\rG(U))$, the see-saw identity gives
\begin{equation}\label{E:see-saw}
\Hom_{\rG(U) \times \rG(U^-)} ( (\omega^\square)_{\rG(V)^\triangle}, \pi \boxtimes
\sigma) = \Hom_{\rG(V)}( \Theta_{\omega}(\pi) \otimes
\Theta_{\omega^-}(\sigma), \CC).
\end{equation}
Assume that $s_{U,V}>0$ and $\pi\boxtimes \sigma$ does not lie on the boundary of ${\rm I}(s_{U,V})$, then we have
\begin{eqnarray*}
  &&\Hom_{\rG(V)}( \theta_{\omega}(\pi) \otimes
\theta_{\omega^-}(\sigma), \CC) \\
  &\hookrightarrow& \Hom_{\rG(V)}( \Theta_{\omega}(\pi) \otimes
\Theta_{\omega^-}(\sigma), \CC)\\
   &=&\Hom_{\rG(U) \times \rG(U^-)} ( (\omega^\square)_{\rG(V)^\triangle}, \pi \boxtimes
\sigma) \\
   &\hookrightarrow&  \Hom_{\rG(U) \times \rG(U^-)} ( {\rm I}(s_{U,V}), \pi \boxtimes
\sigma) \qquad \qquad\textrm{(by Lemma \ref{ry})}\\
  &\hookrightarrow&  \Hom_{\rG(U) \times \rG(U^-)} (  C^{\infty}_c(\rG(U)), \pi \boxtimes
\sigma)  \qquad \qquad\textrm{(by Lemma \ref{L:key0})}\\
  &\cong&  \Hom_{\rG(U)} ( \pi \otimes
\sigma,\CC).
\end{eqnarray*}
This proves the proposition.
\end{proof}

\section{\bf Some induced representations}

To complete the proof of Proposition \ref{T:mainq2},  we need to consider representations $\pi \boxtimes \sigma$ of $\rG(U) \times \rG(U^-)$ which lie on the boundary of ${\rm I}(s_{U,V})$. To deal with these, we study in this section some parabolically induced representations which will play an important role later on.
\vskip 5pt

For  smooth representations $\rho$ of $\GL(X_t)$ and
$\sigma$ of $\rG(U_{t})$ ($0\leq t\leq q_U$), we write
\[  \rho \rtimes \sigma  :=  {\rm Ind}_{\mathrm P(X_t)}^{\rG(U)}  \rho \otimes \sigma. \]
  More generally, the parabolic subgroup $P$ of $\rG(U)$ stabilizing a flag
  \begin{equation}\label{flagx}
    0=X_{t_0}\subset X_{t_1}\subset X_{t_2}\subset\cdots\subset X_{t_a}
    \end{equation}
has a Levi factor of the form  $\GL(X_{t_1}) \times \GL(X_{t_2}/X_{t_1})\times \cdots \times \GL(X_{t_a}/X_{t_{a-1}})\times \rG(U_{t_a})$.  We set
\[
  \rho_1\times\cdots\times\rho_a\rtimes \sigma:=
\Ind_{P}^{\rG(U)}\rho_1\otimes\cdots\otimes\rho_a\otimes \sigma,
\]
where $\rho_i$ is a smooth representation of $\GL(X_{t_i}/X_{t_{i-1}})$ and $\sigma$ is a smooth
representation of $\rG(U_{t_a})$.
 Similarly,  for the general linear group $\GL(X_{t_a})$, we set
\[
\rho_1\times\cdots\times\rho_a:=
\Ind_Q^{\GL(X_{t_a})} \rho_1\otimes\cdots\otimes\rho_a,
\]
where $Q$ is the parabolic subgroup of $\GL(X_{t_a})$ stabilizing the flag \eqref{flagx}. Respectively write $\mathrm R_{X_t}$ and $\mathrm R_{X_t^*}$ for the normalized Jacquet functors attached to $\rP(X_t)$ and $\rP(X_t^*)$.
\vskip 5pt

Let $\eta:\rF^\times \rightarrow \CC^\times$ be a character of $\rF^\times$. Then $\eta^{\times a}$ ($0\leq a\leq q_U$) is an irreducible representation of $\GL(X_a)$ (\emph{cf.} \cite{Se}). Here $\eta$ is viewed as a character of  $\GL(X_t/X_{t-1})$  ($1\leq t\leq a$) via the pullback through the reduced norm map $\GL(X_t/X_{t-1})\rightarrow \rF^\times$. For every $\pi\in \Irr(\rG(U))$, define
\begin{equation}\label{meta}
  \mathrm m_{\eta}(\pi):=\max\{0\leq a\leq q_U\mid \pi \hookrightarrow \eta^{\times a}\rtimes \sigma \textrm{ for some }\sigma\in \Irr(\rG(U_a))\}.
   \end{equation}
   Here  $\pi \hookrightarrow \eta^{\times a}\rtimes \sigma$ means that there is an injective homomorphism from $\pi$ to $\eta^{\times a}\rtimes \sigma$ (similar notation will   be used without further explanation).

The rest of this section is devoted to a proof of the following proposition.

\begin{prop}\label{P:induced}
Assume that $\eta^2$ is non-trivial. Then for every $\pi\in \Irr(\rG(U))$, there is a unique representation $\pi_\eta\in
\Irr(\rG(U_a))$ such that $\pi\hookrightarrow \eta^{\times a}\rtimes \pi_\eta$, where  $a:=\mathrm m_{\eta}(\pi)$. Moreover,
\begin{equation}\label{piiso}
\left\{
  \begin{array}{ll}
    \pi_\eta\cong \left(\mathrm R_{X_a}(\pi)\otimes (\eta^{-1})^{\times a}\right)_{\GL(X_a)}\cong  \Hom_{\GL(X_a)}((\eta^{-1})^{\times a}, \mathrm R_{X_{a}^*}(\pi));\smallskip  \\
    \pi \textrm{ is isomorphic to the socle of } \eta^{\times a}\rtimes \pi_\eta; \smallskip \\
    \mathrm m_\eta(\pi)=\mathrm m_\eta(\pi^\vee);\smallskip \\
    (\pi^\vee)_\eta\cong (\pi_\eta)^\vee.
  \end{array}
\right.
\end{equation}

\end{prop}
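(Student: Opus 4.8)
The plan is to run an induction on $\dim U$ (equivalently on the Witt index $q_U$), extracting the representation $\pi_\eta$ from a careful analysis of the Jacquet modules of $\pi$ along the maximal parabolic $\mathrm P(X_1)$. First I would treat the base case $\mathrm m_\eta(\pi) = 0$: here the claim is vacuous except for $(\pi^\vee)_\eta \cong (\pi_\eta)^\vee$ and $\mathrm m_\eta(\pi) = \mathrm m_\eta(\pi^\vee)$, so the real content is showing that $\mathrm m_\eta(\pi) = 0$ forces $\mathrm m_\eta(\pi^\vee) = 0$. This follows from the standard duality $\mathrm R_{X_t^*}(\pi^\vee) \cong (\mathrm R_{X_t}(\pi))^\vee$ (composed with the outer automorphism $g \mapsto {}^t g^{-1}$ on the $\GL$-factor, which sends $\eta^{\times a}$ to $(\eta^{-1})^{\times a}$), together with second adjunction: $\pi \hookrightarrow \eta^{\times a} \rtimes \sigma$ is equivalent to a nonzero map from an appropriate Jacquet module of $\pi$ onto $\eta^{\times a} \otimes \sigma$, and Frobenius reciprocity across both parabolics links these for $\pi$ and $\pi^\vee$.

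For the inductive step, suppose $a := \mathrm m_\eta(\pi) \geq 1$. Then by definition $\pi \hookrightarrow \eta^{\times a} \rtimes \sigma_0$ for some $\sigma_0 \in \Irr(\rG(U_a))$, and iterating, $\pi \hookrightarrow \eta \rtimes \tau$ for some irreducible $\tau \hookrightarrow \eta^{\times(a-1)}\rtimes \sigma_0$ of $\rG(U_1)$ with $\mathrm m_\eta(\tau) = a-1$ (using that $\eta^2 \neq 1$, which prevents "collapsing" of the $\eta$-segment and makes $\eta^{\times a}$ irreducible with the expected cuspidal support). By second adjunction, $\Hom_{\rG(U)}(\pi, \eta \rtimes \tau) = \Hom_{\GL(X_1)\times\rG(U_1)}(\mathrm R_{X_1^*}(\pi), \eta \otimes \tau) \neq 0$, so $\eta \otimes \tau$ is a quotient of $\mathrm R_{X_1^*}(\pi)$; a geometric-lemma / Kudla-filtration computation of $\mathrm R_{X_1^*}(\pi)$ together with the maximality of $a$ shows that such a $\tau$ is unique and that $\mathrm R_{X_1^*}(\pi)$ has $\eta \otimes \tau$ occurring with the $\eta$-part appearing "maximally." Apply the inductive hypothesis to $\tau$ on $\rG(U_1)$: we get $\tau_\eta \in \Irr(\rG((U_1)_{a-1})) = \Irr(\rG(U_a))$, and I would set $\pi_\eta := \tau_\eta$. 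The formula $\pi_\eta \cong (\mathrm R_{X_a}(\pi) \otimes (\eta^{-1})^{\times a})_{\GL(X_a)}$ then follows by transitivity of Jacquet functors, $\mathrm R_{X_a} = \mathrm R_{X_a/X_1} \circ \mathrm R_{X_1}$ combined with the inductive formula for $\tau_\eta$, once one knows the $\eta^{\times a}$-isotypic piece is one-dimensional — which is where $\eta^2 \neq 1$ is used again, via the irreducibility and self-genericity properties of $\eta^{\times a}$ as a $\GL$-representation (Zelevinsky/Sécherre theory). The socle statement: $\pi$ embeds in $\eta^{\times a} \rtimes \pi_\eta$ by construction; to see it is the \emph{full} socle, one checks that any irreducible sub $\pi'$ also has $\mathrm m_\eta(\pi') \geq a$ hence $= a$ (by maximality applied in reverse, since $\pi' \hookrightarrow \eta^{\times a}\rtimes\pi_\eta$), and then $\pi'_\eta \cong \pi_\eta$ and the uniqueness of the embedding data forces $\pi' \cong \pi$ — here one invokes that $\mathrm R_{X_a}(\eta^{\times a}\rtimes\pi_\eta)$ contains $\eta^{\times a}\otimes\pi_\eta$ with multiplicity one as a quotient.

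For the last two identities, $\mathrm m_\eta(\pi) = \mathrm m_\eta(\pi^\vee)$ and $(\pi^\vee)_\eta \cong (\pi_\eta)^\vee$, I would dualize the whole picture: taking contragredients turns $\pi \hookrightarrow \eta^{\times a}\rtimes\pi_\eta$ into $(\eta^{\times a})^\vee \rtimes (\pi_\eta)^\vee \twoheadrightarrow \pi^\vee$, i.e. $\pi^\vee$ is a quotient of $(\eta^{-1})^{\times a}\rtimes(\pi_\eta)^\vee$; but by the socle characterization applied to the MVW-free setting — rather, by applying the already-established first three bullets to $\pi^\vee$ — one gets $\pi^\vee \hookrightarrow \eta^{\times b}\rtimes(\pi^\vee)_\eta$ with $b = \mathrm m_\eta(\pi^\vee)$, and comparing cuspidal supports (the multiset of $\GL_1(\rD)$-cuspidal data) of $\pi$ and $\pi^\vee$, which are related by $\eta \leftrightarrow \eta^{-1}$, forces $b = a$. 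Then $(\pi^\vee)_\eta$ and $(\pi_\eta)^\vee$ are both the unique irreducible sitting below $\pi^\vee$ after removing the maximal $\eta$-segment, so they agree.

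\textbf{Main obstacle.} I expect the crux to be the uniqueness of $\pi_\eta$ and the maximality bookkeeping in the Jacquet-module computation: one must show that when $\mathrm R_{X_1^*}(\pi)$ is decomposed via the geometric lemma, there is exactly one irreducible constituent of the form $\eta \otimes \tau$ with $\mathrm m_\eta(\tau) = a - 1$ appearing as a quotient, and that no "longer" $\eta$-segment can be pulled off — this is precisely the point where the hypothesis $\eta^2 \neq 1$ is essential, since it guarantees $\eta \not\cong \eta^{-1}$ and hence rules out the degenerate linkage phenomena (and the appearance of $\eta$ on the $\rG(U_t)$-side) that would otherwise break uniqueness.
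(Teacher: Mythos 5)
Your proposal takes a genuinely different route from the paper's proof, but it contains a gap that is not cosmetic. The paper does not induct on $\dim U$ or peel off one copy of $\eta$ at a time via $\mathrm R_{X_1^*}$; instead it works directly with the corank-$a$ Jacquet functor $\mathrm R_{X_a}$ and establishes, in one step, a \emph{direct-sum decomposition}
\[
  \mathrm R_{X_a}(\eta^{\times a}\rtimes\sigma)\;\cong\;(\eta^{\times a}\boxtimes\sigma)\;\oplus\;\rho,
\]
where $\rho$ has no irreducible subquotient of the form $\eta^{\times a}\boxtimes\sigma'$. The content here is not just the multiplicity-one count coming from the geometric lemma (which you do invoke); it is the \emph{splitting}, and the paper obtains it from a separate homological input, namely the vanishing $\mathrm{Ext}^i_{\GL(X_a)}(\eta^{\times a},\rho)=0$ for $\rho\ncong\eta^{\times a}$ (their Lemma 3.2, proved via Bernstein's second adjointness and exactness of the Jacquet functor on injectives, using $\eta^2\neq 1$ to see that $\eta^{\boxtimes a}$ cannot occur in $\bar{\mathrm R}(\rho)$). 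This Ext-vanishing is absent from your outline, and it is precisely what makes everything else work: (i) it upgrades "$\eta^{\times a}\boxtimes\sigma$ occurs with multiplicity one" to "$\eta^{\times a}\boxtimes\sigma$ is a direct summand of $\mathrm R_{X_a}(\pi)$ and of $\mathrm R_{X_a^*}(\pi)$," which is what yields \emph{both} the coinvariant formula and the Hom formula in \eqref{piiso}; (ii) it gives the uniqueness of $\pi_\eta$ (your phrase "maximality of $a$ shows that such a $\tau$ is unique" is asserted, not proved, and a Jacquet module can perfectly well have two non-isomorphic irreducible quotients of the same cuspidal type without the splitting); (iii) it gives the socle irreducibility, since two distinct irreducible subs of $\eta^{\times a}\rtimes\sigma$ would each contribute a summand $\eta^{\times a}\boxtimes\sigma$ to $\mathrm R_{X_a}$, contradicting multiplicity one. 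Your appeal to "multiplicity one as a quotient" is too weak for this last step; one needs multiplicity one as a subquotient, and then the splitting to convert it to a sub/quotient count.

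Two secondary issues with your step-by-step inductive scheme. First, from $\pi\hookrightarrow\eta\rtimes(\eta^{\times(a-1)}\rtimes\sigma_0)$ you can extract an irreducible $\tau$ with $\pi\hookrightarrow\eta\rtimes\tau$ (pass through an irreducible quotient of the image of $\mathrm R_{X_1^*}(\pi)$), but it takes an argument to show this $\tau$ is unique, that it embeds in $\eta^{\times(a-1)}\rtimes\sigma_0$, and that $\mathrm m_\eta(\tau)=a-1$; "a geometric-lemma computation together with the maximality of $a$" is exactly the bookkeeping the paper short-circuits by working at level $a$ directly. Second, in the dualization step, your "comparing cuspidal supports of $\pi$ and $\pi^\vee$, related by $\eta\leftrightarrow\eta^{-1}$" is slippery for a classical group $\rG(U)$: the cuspidal support on the $\GL$ side is only well-defined up to $\rho\mapsto\rho^\vee$, so that multiset does not distinguish $\eta$ from $\eta^{-1}$. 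The paper instead uses the clean mechanism: conjugate by a Weyl element $w$ swapping $X_a$ and $X_a^*$ to transfer the statement for $\mathrm R_{X_a}$ to one for $\mathrm R_{X_a^*}$ (turning $\eta$ into $\eta^{-1}$ on the $\GL$-factor), then dualize and apply second adjointness to get $\mathrm R_{X_a}(\pi^\vee)\twoheadrightarrow\eta^{\times a}\boxtimes\sigma^\vee$, hence $\pi^\vee\hookrightarrow\eta^{\times a}\rtimes\sigma^\vee$. Your sketch is in the right spirit, but needs to be run through these precise adjunctions. The parts you do capture accurately are the role of the geometric lemma, the use of $\eta^2\neq1$ to avoid linkage, and the $w$-conjugation; what is missing is the Ext-vanishing splitting, and without it the proposal does not close.
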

\vskip 5pt

We begin with the following lemma.
\begin{lem}\label{L:ext}
For  all $\rho\in \Irr(\GL(X_a))$   ($0\leq a\leq q_U$) which is not isomorphic to $\eta^{\times a}$,
\[
   \mathrm{Ext}^i_{\GL(X_a)}(\eta^{\times a},\rho)=0\qquad (i\in \ZZ).
\]
\end{lem}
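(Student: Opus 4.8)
The plan is to reduce the vanishing of all higher $\mathrm{Ext}$-groups to a statement about representations of $\GL(X_a)\cong\GL_a(\rD)$, and then exploit the fact that $\eta^{\times a}$ is (essentially by Bernstein--Zelevinsky/S\'echerre theory) a very rigid object — it generates its own Bernstein block, and in that block it is both projective and injective relative to the finitely generated objects. Concretely, $\eta^{\times a}$ is the unique irreducible subquotient of the parabolically induced representation from the supercuspidal support $(\eta,\dots,\eta)$ that is \emph{not} linked in the sense of Zelevinsky to anything else only trivially; since $\eta^2$ is nontrivial, the characters $\eta|\det|^j$ occurring are all distinct, so the segment $[\eta,\dots,\eta]$ of length $a$ (with trivial twists) behaves like the Zelevinsky/Langlands situation and $\eta^{\times a}$ is both the socle and cosocle of the relevant induced representation, with no self-extensions.

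First I would set up the Bernstein decomposition: the category of smooth representations of $\GL(X_a)$ splits as a product over inertial supercuspidal supports, and $\mathrm{Ext}^i_{\GL(X_a)}(\eta^{\times a},\rho)$ vanishes unless $\rho$ lies in the same block as $\eta^{\times a}$. So I may assume $\rho$ has supercuspidal support contained in $\{\eta|\det|^j : j\in\ZZ\}$ (up to the unramified twist fixed by $\eta$). Next, because $\eta^2\ne 1$, I would invoke the classification of irreducibles with this support: they are indexed by multisegments built from the line $\{\eta|\det|^j\}$, and $\eta^{\times a}$ corresponds to the single segment $\{\eta,\eta|\det|,\dots\}$ — wait, more precisely $\eta^{\times a}$ is the \emph{Speh-type} (fully reducible-looking but irreducible) representation attached to $a$ copies of $\eta$, which in Zelevinsky language is the segment of length $a$. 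The key structural input is then: $\eta^{\times a}$ appears with multiplicity one in any standard module of its block, it is the head of exactly one standard module (the one induced from $\eta\otimes\cdots\otimes\eta$ suitably ordered) and the socle of exactly one (the opposite ordering), and it is a generic-type representation so it has no self-extensions.

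The cleanest route, which I would actually carry out, is a dimension-shifting/induction argument. I would first prove $\mathrm{Ext}^0_{\GL(X_a)}(\eta^{\times a},\rho)=\Hom(\eta^{\times a},\rho)=0$ for irreducible $\rho\not\cong\eta^{\times a}$: this is immediate since $\eta^{\times a}$ is irreducible and a nonzero hom between irreducibles is an isomorphism. For $i\ge 1$ I would use Frobenius reciprocity together with the fact that $\eta^{\times a}=\Ind_Q^{\GL(X_a)}(\eta\otimes\cdots\otimes\eta)$ for the appropriate parabolic $Q$, so that by the exactness of normalized parabolic induction and its second adjoint,
\[
\mathrm{Ext}^i_{\GL(X_a)}(\eta^{\times a},\rho)\cong \mathrm{Ext}^i_{M_Q}(\eta\otimes\cdots\otimes\eta,\ r_{\bar Q}(\rho)),
\]
where $r_{\bar Q}$ is the Jacquet functor for the opposite parabolic. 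Now $\eta\otimes\cdots\otimes\eta$ is a \emph{supercuspidal} (irreducible cuspidal) representation of the Levi $M_Q\cong\GL_1(\rD)^a$, and supercuspidal representations of a product of general linear groups over $\rD$ are both projective and injective in the category of representations of $M_Q$ on which the center acts by the appropriate character — or more simply, one uses that $\mathrm{Ext}^i$ between a supercuspidal and any finite-length representation vanishes for $i\ge 1$, because a supercuspidal is projective and injective in its own Bernstein block (the block being a module category over a torus, i.e. a polynomial ring, in which the supercuspidal is a "point" that is a direct summand up to the torus action — actually it is the fact that cuspidal blocks for $\GL_n(\rD)$ are equivalent to modules over $\CC[X^{\pm1}]$ and the cuspidal itself sits over the augmentation, where it has a length-one resolution only in the $\mathrm{Ext}^0,\mathrm{Ext}^1$ range unless the character matches). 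Since $\eta^2$ nontrivial forces the inertial class of $\eta$ to be such that $r_{\bar Q}(\rho)$ cannot contain $\eta\otimes\cdots\otimes\eta$ with a nontrivial extension — here is where $\rho\not\cong\eta^{\times a}$ enters: if it did contain it as a subquotient with any extension, one would build a subrepresentation or quotient isomorphic to $\eta^{\times a}$, contradicting irreducibility of $\rho$ — all higher $\mathrm{Ext}$ vanish.

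The main obstacle, and the step I would spend the most care on, is justifying the vanishing of $\mathrm{Ext}^i_{M_Q}(\text{supercuspidal},\,-)$ for $i\ge1$ and correctly tracking the role of the hypothesis $\eta^2\ne1$. For $\GL_n$ over a field this is classical (cuspidal blocks are semisimple up to unramified twist), and over the quaternion algebra $\rD$ it follows from S\'echerre--Stevens type-theory identifying cuspidal Bernstein blocks of $\GL_m(\rD)$ with module categories over $\CC[X^{\pm1}]$; in such a block an object supported at a single point $X=c$ has global dimension at most $1$, so $\mathrm{Ext}^i$ vanishes for $i\ge 2$ automatically, and $\mathrm{Ext}^1$ vanishes precisely when the two points do not collide, which the hypothesis $\eta^2\ne1$ (ensuring the self-twists $\eta|\det|^j$ are all genuinely distinct and the relevant reducibility points are avoided) guarantees. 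I would phrase the argument so as to invoke these facts as black boxes with precise references rather than reproving them, then assemble the dimension-shift above to conclude $\mathrm{Ext}^i_{\GL(X_a)}(\eta^{\times a},\rho)=0$ for all $i\in\ZZ$ and all irreducible $\rho\not\cong\eta^{\times a}$.
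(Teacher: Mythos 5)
Your proposal follows essentially the same route as the paper: reduce $\mathrm{Ext}^i_{\GL(X_a)}(\eta^{\times a},\rho)$ via second adjointness (using exactness of the Jacquet functor and that it preserves injectives) to $\mathrm{Ext}^i_{(\rD^\times)^a}(\eta^{\boxtimes a},\bar{\mathrm R}(\rho))$, observe that $\eta^{\boxtimes a}$ cannot occur as a subquotient of $\bar{\mathrm R}(\rho)$, and then conclude the vanishing from the structure of cuspidal blocks on the Levi. That is exactly the paper's proof (which cites Bernstein for the subquotient fact). Two points deserve correction, however.

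First, you invoke the hypothesis $\eta^2\ne 1$ throughout, but this lemma does not assume it: in the paper the assumption $\eta^2\ne 1$ is introduced only \emph{after} Lemma \ref{L:ext} and Lemma \ref{extgg}. And indeed the hypothesis is not needed here: the representation $\eta^{\times a}$ of $\GL_a(\rD)$ is always irreducible (the segments $[\eta],\dots,[\eta]$ are never linked, since linking requires a shift by $|\cdot|$), and this irreducibility, not $\eta^2 \ne 1$, is the structural input. Your extended discussion of ``avoided reducibility points'' and of $\eta^2\ne1$ forcing distinctness of the $\eta|\det|^j$ is therefore extraneous and somewhat misleading at this stage.

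Second, the key step --- why $\eta^{\boxtimes a}$ is not a subquotient of $r_{\bar Q}(\rho)$ --- is stated incorrectly. You write that if it were, ``one would build a subrepresentation or quotient isomorphic to $\eta^{\times a}$, contradicting irreducibility of $\rho$.'' That is not the argument: exhibiting such a sub or quotient of the irreducible $\rho$ would simply force $\rho\cong\eta^{\times a}$, which contradicts the \emph{hypothesis} $\rho\not\cong\eta^{\times a}$, not irreducibility. More precisely, the correct statement (Bernstein's uniqueness of cuspidal support, cited in the paper) is: if $\eta^{\boxtimes a}$ occurs as a subquotient of $\bar{\mathrm R}(\rho)$, then $\rho$ has cuspidal support $(\eta,\dots,\eta)$, and since $\eta^{\times a}$ is irreducible it is the unique irreducible with that cuspidal support, so $\rho\cong\eta^{\times a}$ --- excluded. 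Once this is in place, your block/skyscraper-sheaf discussion on the Levi correctly gives the vanishing of all $\mathrm{Ext}^i$, and in fact is more detail than the paper bothers to give (it says ``it is easy to see'').

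So: same strategy as the paper, with the vanishing on the Levi argued carefully, but the subquotient step is misreasoned (though the conclusion is right) and the reliance on $\eta^2\ne1$ should be dropped.
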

\begin{proof}
Since the Jacquet functor is exact and maps injective representations to injective representations, the second adjointness theorem of Bernstein implies that
\begin{equation}\label{extgl}
  \mathrm{Ext}^i_{\GL(X_a)}(\eta^{\times a},\rho)\cong \mathrm{Ext}^i_{(\rD^\times)^a}(\eta^{\boxtimes a}, \bar{\mathrm R} (\rho)).
\end{equation}
Here $\GL(X_a)$ is identified with $\GL_a(\rD)$ as usual, and $\bar{\mathrm R}$ denotes the normalized Jacquet functor attached to the minimal parabolic subgroup of $\GL_a(\rD)$ of  lower triangular matrices. Note that $\rho\ncong \eta^{\times a}$ implies that $\eta^{\boxtimes a}$ is not a subquotient of $\bar{\mathrm R}(\rho)$ (\emph{cf.} \cite[Chapter 3, Section 2.1, Theorem 18]{Be}). Then it is easy to see that the right hand side of \eqref{extgl} vanishes.

\end{proof}

By an easy homological algebra argument, Lemma \ref{L:ext} implies the following lemma.
\begin{lem}\label{extgg}
For  all $\rho\in \Irr(\GL(X_a))$  ($0\leq a\leq q_U$) which is not isomorphic to $\eta^{\times a}$, and all $\sigma, \sigma'\in \Irr(\rG(U_a))$,
\[
   \mathrm{Ext}^i_{\GL(X_a)\times \rG(U_a)}(\eta^{\times a}\boxtimes\sigma ,\rho\boxtimes \sigma')=0\qquad (i\in \ZZ).
\]
\end{lem}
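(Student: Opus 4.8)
The plan is to deduce Lemma \ref{extgg} from a Künneth formula for $\mathrm{Ext}$-groups of external tensor products, feeding in Lemma \ref{L:ext}. Write $G_1:=\GL(X_a)$ and $G_2:=\rG(U_a)$. All four representations $\eta^{\times a},\sigma$ (sources) and $\rho,\sigma'$ (targets) are finitely generated admissible smooth representations of $p$-adic groups, so one may choose resolutions $P_\bullet\to\eta^{\times a}$ and $Q_\bullet\to\sigma$ by finitely generated projective smooth representations of $G_1$, resp.\ $G_2$ (using that the relevant Bernstein components of the categories of smooth representations are Noetherian, \emph{cf.}\ \cite{Be}).

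First I would record the elementary identity $C^{\infty}_c(G_1/K_1)\boxtimes C^{\infty}_c(G_2/K_2)\cong C^{\infty}_c\big((G_1\times G_2)/(K_1\times K_2)\big)$ for compact open $K_i\subset G_i$; hence the external tensor product of two finitely generated projective smooth representations is again finitely generated projective over $G_1\times G_2$, and for such $P,Q$ one has, functorially in $\rho$ and $\sigma'$,
\[
\Hom_{G_1\times G_2}(P\boxtimes Q,\,\rho\boxtimes\sigma')=\Hom_{G_1}(P,\rho)\otimes_{\CC}\Hom_{G_2}(Q,\sigma'),
\]
since after realising $P,Q$ as summands of finite direct sums of the standard projective generators both sides are visibly the same space of $K_1\times K_2$-invariants. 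The external tensor product functor is exact, and over the field $\CC$ the tensor product of two exact complexes is exact, so $\mathrm{Tot}(P_\bullet\boxtimes Q_\bullet)$ is a projective resolution of $\eta^{\times a}\boxtimes\sigma$ as a $G_1\times G_2$-representation. Applying $\Hom_{G_1\times G_2}(-,\rho\boxtimes\sigma')$ gives the total complex of the double complex $\Hom_{G_1}(P_\bullet,\rho)\otimes_{\CC}\Hom_{G_2}(Q_\bullet,\sigma')$, whose cohomology is computed by the Künneth formula over $\CC$:
\[
\mathrm{Ext}^i_{G_1\times G_2}(\eta^{\times a}\boxtimes\sigma,\ \rho\boxtimes\sigma')\ \cong\ \bigoplus_{p+q=i}\mathrm{Ext}^p_{G_1}(\eta^{\times a},\rho)\otimes_{\CC}\mathrm{Ext}^q_{G_2}(\sigma,\sigma').
\]
Now Lemma \ref{L:ext} applies: $\rho\in\Irr(\GL(X_a))$ is not isomorphic to $\eta^{\times a}$, so $\mathrm{Ext}^p_{G_1}(\eta^{\times a},\rho)=0$ for every $p$, and therefore every summand on the right vanishes, yielding $\mathrm{Ext}^i_{\GL(X_a)\times\rG(U_a)}(\eta^{\times a}\boxtimes\sigma,\rho\boxtimes\sigma')=0$ for all $i\in\ZZ$.

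The only genuinely non-formal point — and the place I expect the argument to need care — is ensuring the resolutions $P_\bullet,Q_\bullet$ may be taken by \emph{finitely generated} projectives, so that $\Hom$ out of them commutes with the (a priori infinite-dimensional) other tensor factor and so that passing from $P\boxtimes Q$ to the individual $\Hom$-spaces is legitimate; this uses finite generation of $\eta^{\times a}$ and $\sigma$. Alternatively, one can bypass projective resolutions over the product group entirely by running the Grothendieck spectral sequence for the composite of $M\mapsto\Hom_{G_1}(\eta^{\times a},M)$ (smooth part) and $N\mapsto\Hom_{G_2}(\sigma,N)$, for which the identification of the derived functors of the first step with $\mathrm{Ext}^\bullet_{G_1}(\eta^{\times a},-)\otimes\sigma'$ on representations of the form $\rho\boxtimes\sigma'$ again forces the vanishing from Lemma \ref{L:ext}.
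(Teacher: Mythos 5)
Your proof is correct and is essentially the paper's own (the paper simply asserts that Lemma \ref{extgg} follows from Lemma \ref{L:ext} ``by an easy homological algebra argument''): the intended argument is exactly the K\"unneth decomposition of $\mathrm{Ext}$ for external tensor products, which you justify properly via finitely generated projective resolutions. No gaps.
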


From now on, we assume that the character $\eta^2 \ne 1$.
 \begin{lem}  \label{L:geometric0}
 Let $\sigma\in \Irr(\rG(U_a))$ ($0\leq a\leq q_U$). Assume that $\mathrm m_\eta(\sigma)=0$ (as defined in \eqref{meta}). Then
 \[
   \mathrm R_{X_a}(\eta^{\times a}\rtimes \sigma)\cong\left(\eta^{\times a}\boxtimes \sigma\right)\oplus \rho,
 \]
 where $\rho$ is a smooth representation of $\GL(X_a)\times \rG(U_a)$ which has no irreducible subquotient of the form $\eta^{\times a}\boxtimes \sigma'$ with $\sigma'\in \Irr(\rG(U_a))$. Consequently, the socle of $\eta^{\times a}\rtimes \sigma$ is irreducible.
 \end{lem}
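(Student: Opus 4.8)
The plan is to compute the Jacquet module $\mathrm R_{X_a}(\eta^{\times a} \rtimes \sigma)$ by the geometric lemma of Bernstein--Zelevinsky, adapted to the setting of isometry groups as in Tadi\'c's work. Applying $\mathrm R_{X_a}$ to the induced representation $\Ind_{\mathrm P(X_a)}^{\rG(U)} \eta^{\times a} \otimes \sigma$ produces a filtration whose successive subquotients are indexed by double cosets $\mathrm P(X_a) \backslash \rG(U) / \mathrm P(X_a)$, and each subquotient is obtained by inducing (up to $\GL(X_a) \times \rG(U_a)$) a twisted Jacquet module of $\eta^{\times a} \boxtimes \sigma$ along an appropriate intersection of parabolics. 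The open cell contributes the ``obvious'' summand $\eta^{\times a} \boxtimes \sigma$ (this is the piece where the double coset representative is trivial), and one must show it is a direct summand, not merely a subquotient, and that every other cell contributes something with no irreducible subquotient of the form $\eta^{\times a} \boxtimes \sigma'$.

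First I would set up the geometric lemma explicitly, listing the relevant double coset representatives; these are parametrised by triples recording how $X_a$ meets its Weyl translate, so a typical cell involves a parameter $0 \le j \le a$ and produces a subquotient of the form
\[
   \Ind\left( \eta^{\times (a-j)} \times \bigl(\text{something built from } \mathrm R(\eta^{\times j}) \text{ and } \mathrm R_{\text{-}}(\sigma)\bigr) \right),
\]
roughly speaking, where the inner factor involves Jacquet modules of $\sigma$ with respect to a $j$-dimensional isotropic flag in $U_a$. Because $\mathrm m_\eta(\sigma) = 0$, no such Jacquet module of $\sigma$ contains $\eta^{\times j} \boxtimes (\text{anything})$ or its dual-twisted analogue for $j > 0$; combined with the hypothesis $\eta^2 \ne 1$ (which prevents $\eta$ and $\eta^{-1}$ from interacting, so that, e.g., $\eta^{\times(a-j)} \times (\eta^{-1})^{\times j}$ never rearranges to produce $\eta^{\times a}$ on the $\GL$-factor), one deduces that for every cell with $j > 0$ the $\GL(X_a)$-subquotients avoid $\eta^{\times a}$. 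This is exactly the point where Lemma \ref{L:ext} and Lemma \ref{extgg} enter: they show $\mathrm{Ext}^i$ between $\eta^{\times a} \boxtimes \sigma$ and anything of the excluded form vanishes, so the filtration splits off the open-cell summand and $\rho$ (the sum of the remaining graded pieces) has the asserted property.

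Finally, the ``consequently'' clause follows formally: if $\pi \hookrightarrow \eta^{\times a} \rtimes \sigma$ is any irreducible subrepresentation, then by Frobenius reciprocity $\eta^{\times a} \boxtimes \sigma \hookrightarrow \mathrm R_{X_a}(\eta^{\times a}\rtimes\sigma)$ restricted to the socle forces $\eta^{\times a} \boxtimes \sigma$ to be a quotient of $\mathrm R_{X_a}(\pi)$ (using that $\mathrm R_{X_a}(\pi)$ is a subobject by exactness and the splitting above identifies the $\eta^{\times a}$-isotypic part), and since this determines $\pi$ uniquely by the multiplicity-one of that constituent, the socle must be irreducible. I expect the main obstacle to be the bookkeeping in the geometric lemma: correctly identifying all double coset representatives for the isometry group (which is more delicate than the $\GL$ case because isotropic subspaces can be in ``general position'' in several inequivalent ways) and checking carefully that $\mathrm m_\eta(\sigma) = 0$ together with $\eta^2 \ne 1$ really does kill the $\eta^{\times a}$-component in every non-open cell — in particular handling the cells where part of the $\eta$-string gets reflected into an $\eta^{-1}$-string on a $\GL$-block and making sure no coincidence $\eta \cong \eta^{-1}$ resurrects the forbidden constituent.
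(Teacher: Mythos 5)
Your approach matches the paper's own proof: both compute $\mathrm R_{X_a}(\eta^{\times a}\rtimes\sigma)$ via the Bernstein--Zelevinsky geometric lemma in its form for classical groups (cf.\ \cite[Lemma 5.1]{ta}, \cite{Ha}, as used in \cite[Lemma 5.2]{gt}), use the hypotheses $\mathrm m_\eta(\sigma)=0$ and $\eta^2\neq 1$ to exclude $\eta^{\times a}\boxtimes\sigma'$ from the non-identity cells, and then invoke Lemma~\ref{extgg} to split off the summand $\eta^{\times a}\boxtimes\sigma$. The concluding socle argument by Frobenius reciprocity and multiplicity one is likewise what the paper uses (delegated there to \cite[Lemma 5.2]{gt}).
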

 \begin{proof}
 Denote by $\rho$ the kernel of the natural surjective homomorphism
 \begin{equation}\label{sursplit}
    \mathrm R_{X_a}(\eta^{\times a}\rtimes \sigma)\twoheadrightarrow \eta^{\times a}\boxtimes \sigma.
 \end{equation}
As in the proof of \cite[Lemma 5.2]{gt}, using an explication of the Geometric Lemma of Bernstein-Zelevinsky (\emph{cf}. \cite[Lemma 5.1]{ta} and \cite{Ha}), the assumption of the lemma implies that $\rho$ contains no irreducible subquotient of the form $\eta^{\times a}\boxtimes \sigma'$ with $\sigma'\in \Irr(\rG(U_a))$. Then Lemma \ref{extgg} implies that the surjective homomorphism \eqref{sursplit} splits. This proves the first assertion of the lemma. The second assertion then easily follows as in \cite[Lemma 5.2]{gt}.
 \end{proof}

 In the rest of this section, let $\pi\in \Irr(\rG(U))$ and put $a:=\mathrm m_\eta(\pi)$. Then there is an irreducible representation $\sigma\in \Irr(\rG(U_a))$ such that $\pi \hookrightarrow\eta^{\times a}\rtimes \sigma$. Induction-by-steps shows that $\mathrm m_\eta(\sigma)=0$.

 \begin{lem}  \label{L:geometricirr2}
One has that
\[
   \mathrm R_{X_a}(\pi)\cong\left(\eta^{\times a}\boxtimes \sigma\right)\oplus \rho,
 \]
 where $\rho$ is a smooth representation of $\GL(X_a)\times \rG(U_a)$ which has no irreducible subquotient of the form $\eta^{\times a}\boxtimes \sigma'$ with $\sigma'\in \Irr(\rG(U_a))$.

\end{lem}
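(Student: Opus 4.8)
The plan is to deduce Lemma \ref{L:geometricirr2} from Lemma \ref{L:geometric0} by comparing the Jacquet modules $\mathrm R_{X_a}(\pi)$ and $\mathrm R_{X_a}(\eta^{\times a}\rtimes\sigma)$ through the inclusion $\pi\hookrightarrow\eta^{\times a}\rtimes\sigma$. Since $\mathrm R_{X_a}$ is exact, applying it to $\pi\hookrightarrow\eta^{\times a}\rtimes\sigma$ yields an embedding
\[
  \mathrm R_{X_a}(\pi)\hookrightarrow \mathrm R_{X_a}(\eta^{\times a}\rtimes\sigma)\cong\left(\eta^{\times a}\boxtimes\sigma\right)\oplus\rho_0,
\]
where, by Lemma \ref{L:geometric0} (applicable because $\mathrm m_\eta(\sigma)=0$ as noted just before the statement), $\rho_0$ has no irreducible subquotient of the form $\eta^{\times a}\boxtimes\sigma'$. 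Write $\rho$ for the intersection of $\mathrm R_{X_a}(\pi)$ with $\rho_0$ inside this direct sum; then $\rho$ inherits the property of having no such subquotient, and $\mathrm R_{X_a}(\pi)/\rho$ embeds into $\eta^{\times a}\boxtimes\sigma$, hence is either $0$ or all of $\eta^{\times a}\boxtimes\sigma$ (the latter being irreducible since $\eta^{\times a}$ is irreducible by \cite{Se} and $\sigma$ is irreducible).

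The crux is therefore to show that $\mathrm R_{X_a}(\pi)/\rho$ is nonzero, i.e. that $\eta^{\times a}\boxtimes\sigma$ actually occurs as a quotient of $\mathrm R_{X_a}(\pi)$ and not merely that $\mathrm R_{X_a}(\pi)$ sits inside $\rho_0$. This follows from Frobenius reciprocity: the nonzero map $\pi\hookrightarrow\eta^{\times a}\rtimes\sigma=\Ind_{\rP(X_a)}^{\rG(U)}\eta^{\times a}\boxtimes\sigma$ corresponds by adjunction to a nonzero $\GL(X_a)\times\rG(U_a)$-map $\mathrm R_{X_a}(\pi)\to\eta^{\times a}\boxtimes\sigma$. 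If this map were zero the original embedding would be zero, a contradiction; and any nonzero map into the irreducible $\eta^{\times a}\boxtimes\sigma$ is surjective. So $\mathrm R_{X_a}(\pi)/\rho\cong\eta^{\times a}\boxtimes\sigma$, and by Lemma \ref{extgg} (with $\rho_0$, hence $\rho$, free of $\eta^{\times a}\boxtimes\sigma'$-subquotients) the short exact sequence $0\to\rho\to\mathrm R_{X_a}(\pi)\to\eta^{\times a}\boxtimes\sigma\to0$ splits, giving $\mathrm R_{X_a}(\pi)\cong(\eta^{\times a}\boxtimes\sigma)\oplus\rho$ with $\rho$ having no irreducible subquotient of the form $\eta^{\times a}\boxtimes\sigma'$.

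The main obstacle I anticipate is purely bookkeeping: one must be careful that ``$\rho$ has no subquotient of the form $\eta^{\times a}\boxtimes\sigma'$'' is stable under passing to subrepresentations (clear, since subquotients of a subrepresentation are subquotients of the whole) and under the splitting, and that the multiplicity of $\eta^{\times a}\boxtimes\sigma$ in $\mathrm R_{X_a}(\pi)$ is exactly one — but this last point is forced, since $\mathrm R_{X_a}(\pi)$ embeds in $\mathrm R_{X_a}(\eta^{\times a}\rtimes\sigma)$, whose $\eta^{\times a}\boxtimes(-)$-isotypic part is exactly the single copy $\eta^{\times a}\boxtimes\sigma$ by Lemma \ref{L:geometric0}. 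Thus there is no room for $\eta^{\times a}\boxtimes\sigma'$ with $\sigma'\neq\sigma$ nor for a second copy of $\eta^{\times a}\boxtimes\sigma$, which is exactly what makes the splitting via Lemma \ref{extgg} applicable. No genuinely hard step is expected; the proof is a short adjunction-plus-$\mathrm{Ext}$-vanishing argument modeled on \cite[Lemma 5.2]{gt}.
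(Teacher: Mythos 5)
Your proof is correct and follows the same route as the paper: exactness of $\mathrm R_{X_a}$ gives the embedding into $\mathrm R_{X_a}(\eta^{\times a}\rtimes\sigma)\cong(\eta^{\times a}\boxtimes\sigma)\oplus\rho_0$, Frobenius reciprocity applied to $\pi\hookrightarrow\eta^{\times a}\rtimes\sigma$ shows that $\eta^{\times a}\boxtimes\sigma$ occurs as a quotient of $\mathrm R_{X_a}(\pi)$, and the decomposition then follows from Lemma \ref{L:geometric0} (the paper says ``the lemma easily follows''; your explicit splitting via Lemma \ref{extgg} is one natural way to fill that in). The paper's proof is a one-line compression of exactly this argument.
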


\begin{proof}
Since $\mathrm R_{X_a}(\pi)$ is a subrepresentation of $\mathrm R_{X_a}(\eta^{\times a}\rtimes \sigma)$ and has $\eta^{\times a}\boxtimes \sigma$ as an irreducible quotient, the lemma easily follows from Lemma \ref{L:geometric0}.
\end{proof}

 \begin{lem}  \label{L:geometricirr3}
One has that
\[
   \mathrm R_{X_a^*}(\pi)\cong\left((\eta^{-1})^{\times a}\boxtimes \sigma\right)\oplus \rho,
 \]
 where $\rho$ is a smooth representation of $\GL(X_a)\times \rG(U_a)$ which has no irreducible subquotient of the form $(\eta^{-1})^{\times a}\boxtimes \sigma'$ with $\sigma'\in \Irr(\rG(U_a))$.

\end{lem}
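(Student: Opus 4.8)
The plan is to deduce Lemma \ref{L:geometricirr3} from Lemma \ref{L:geometricirr2} by conjugating with a single element of $\rG(U)$ that interchanges $X_a$ and $X_a^*$. First I would produce $g\in\rG(U)$ with $gX_a=X_a^*$, $gX_a^*=X_a$ and $g|_{U_a}=\operatorname{id}_{U_a}$; such a $g$ exists by Witt's extension theorem, since it suffices to exhibit such an isometry of the nondegenerate hyperbolic space $X_a\oplus X_a^*$ and then extend it by the identity on $U_a$. Conjugation by $g$ then carries the parabolic $\rP(X_a)$ to $\rP(X_a^*)$, carries $\delta_{\rP(X_a)}$ to $\delta_{\rP(X_a^*)}$, and normalizes the common Levi factor $L:=\GL(X_a)\times\rG(U_a)=\rP(X_a)\cap\rP(X_a^*)$.

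Next, using that $\pi\cong{}^{g^{-1}}\pi$ as a representation of $\rG(U)$ (since $g\in\rG(U)$) together with the standard compatibility of normalized Jacquet functors with conjugation, I would obtain a canonical isomorphism of $L$-representations
\[ \mathrm R_{X_a^*}(\pi)\cong{}^{\mathrm{Int}(g)}\!\bigl(\mathrm R_{X_a}(\pi)\bigr), \]
where ${}^{\mathrm{Int}(g)}$ denotes twisting by the automorphism $\mathrm{Int}(g)|_L\in\Aut(L)$. The key point is the computation of this automorphism: $\mathrm{Int}(g)$ acts trivially on the factor $\rG(U_a)$ (because $g|_{U_a}=\operatorname{id}$), and it acts on the factor $\GL(X_a)$ by an automorphism equivalent, up to an inner one, to the contragredient-inducing automorphism $m\mapsto (m^*)^{-1}$, the adjoint-inverse with respect to $\la\,,\,\ra_U$ under the identification $\GL(X_a)\cong\GL(X_a^*)$ supplied by $g$. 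Recall that twisting a representation $\rho$ of $\GL_a(\rD)$ by this automorphism produces $\rho^\vee$.

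Finally I would apply ${}^{\mathrm{Int}(g)}$ to the decomposition of Lemma \ref{L:geometricirr2}. It sends $\eta^{\times a}\boxtimes\sigma$ to $(\eta^{\times a})^\vee\boxtimes\sigma\cong(\eta^{-1})^{\times a}\boxtimes\sigma$, using $(\eta^{\times a})^\vee\cong(\eta^{-1})^{\times a}$; and, being an equivalence of categories with inverse ${}^{\mathrm{Int}(g^{-1})}$ which interchanges irreducible subquotients of the form $\eta^{\times a}\boxtimes\sigma'$ with those of the form $(\eta^{-1})^{\times a}\boxtimes\sigma'$, it sends the summand $\rho$ to a representation having no irreducible subquotient of the form $(\eta^{-1})^{\times a}\boxtimes\sigma'$. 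This yields the lemma, with $\rho$ replaced by ${}^{\mathrm{Int}(g)}\rho$.

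The step I expect to be the main obstacle is the bookkeeping in the second paragraph: checking that $\mathrm{Int}(g)$ genuinely induces on $\GL(X_a)$ (up to an inner automorphism) the contragredient-inducing automorphism, and verifying that $\mathrm{Int}(g)$ matches $\delta_{\rP(X_a)}$ with $\delta_{\rP(X_a^*)}$ on $L$, so that the \emph{normalized} Jacquet modules—not merely their unnormalized versions up to a character twist—are truly isomorphic. A more pedestrian alternative, computing $\mathrm R_{X_a^*}(\eta^{\times a}\rtimes\sigma)$ directly via the Geometric Lemma as in Lemma \ref{L:geometric0}, appears messier here: the open and closed cells of $\rP(X_a^*)\backslash\rG(U)/\rP(X_a)$ would both contribute relevant terms (namely $\eta^{\times a}\boxtimes\sigma$ and $(\eta^{-1})^{\times a}\boxtimes\sigma$), so isolating the desired direct summand would force one to rerun the $\mathrm{Ext}$-vanishing argument of Lemma \ref{L:geometric0}; transporting the already-established Lemma \ref{L:geometricirr2} by $g$ is cleaner.
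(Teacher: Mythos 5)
Your proposal is correct and takes essentially the same approach as the paper: the paper's proof is precisely to conjugate by an element $w\in\rG(U)$ that exchanges $X_a$ and $X_a^*$ and is the identity on $U_a$, and to observe that this conjugation carries Lemma \ref{L:geometricirr2} to Lemma \ref{L:geometricirr3}. You have simply spelled out the bookkeeping (the action of $\mathrm{Int}(g)$ on the common Levi, the contragredient twist $\eta^{\times a}\mapsto(\eta^{-1})^{\times a}$, and the matching of modulus characters) that the paper leaves implicit.
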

\begin{proof}
 Note that  $\rP(X_a)$ is conjugate to $\rP(X_a^*)$ by an element $w \in \rG(U)$ such that $w$ is the identity on $U_a$ and $w$ exchanges $X_a$ and $X_a^*$.
  Via conjugation by $w$, we see that Lemma \ref{L:geometricirr3} is equivalent to Lemma \ref{L:geometricirr2}.

\end{proof}

Lemma \ref{L:geometricirr2} and Lemma \ref{L:geometricirr3} imply that
\[
  \sigma\cong \left(\mathrm R_{X_a}(\pi)\otimes (\eta^{-1})^{\times a}\right)_{\GL(X_a)}\cong  \Hom_{\GL(X_a)}((\eta^{-1})^{\times a}, \mathrm R_{X_{a}^*}(\pi)).
\]
This proves the uniqueness assertion of Proposition \ref{P:induced}, as well as the first assertion of \eqref{piiso}. The  second assertion of \eqref{piiso} is then implied by the last assertion of Lemma \ref{L:geometric0}.

 \begin{lem}  \label{L:geometric4}
 One has that
 \[
  \pi^\vee \hookrightarrow\eta^{\times a}\rtimes \sigma^\vee.
 \]
 \end{lem}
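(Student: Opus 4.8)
The plan is to reduce the desired embedding to a statement about the Jacquet module $\mathrm R_{X_a}(\pi^\vee)$ and then read it off from Lemma \ref{L:geometricirr3}. Since $\pi^\vee\in\Irr(\rG(U))$, any nonzero homomorphism out of $\pi^\vee$ is automatically injective, so it suffices to show $\Hom_{\rG(U)}(\pi^\vee,\,\eta^{\times a}\rtimes\sigma^\vee)\neq 0$. Writing $\eta^{\times a}\rtimes\sigma^\vee=\Ind_{\rP(X_a)}^{\rG(U)}(\eta^{\times a}\boxtimes\sigma^\vee)$ and applying Frobenius reciprocity (the adjunction between the normalized Jacquet functor $\mathrm R_{X_a}$ and $\Ind_{\rP(X_a)}^{\rG(U)}$), this Hom space is identified with
\[
\Hom_{\GL(X_a)\times\rG(U_a)}\big(\mathrm R_{X_a}(\pi^\vee),\ \eta^{\times a}\boxtimes\sigma^\vee\big),
\]
so it is enough to exhibit $\eta^{\times a}\boxtimes\sigma^\vee$ as a quotient (in fact a direct summand) of $\mathrm R_{X_a}(\pi^\vee)$.

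To compute $\mathrm R_{X_a}(\pi^\vee)$ I would use that $\rP(X_a)$ and $\rP(X_a^*)$ are opposite parabolic subgroups of $\rG(U)$ with common Levi factor $\GL(X_a)\times\rG(U_a)$ — this is exactly the $w$-conjugation already exploited in the proof of Lemma \ref{L:geometricirr3}. Since $\pi$ is admissible, Casselman's duality between Jacquet modules and contragredients with respect to a pair of opposite parabolics gives
\[
\mathrm R_{X_a}(\pi^\vee)\;\cong\;\big(\mathrm R_{X_a^*}(\pi)\big)^\vee .
\]
Substituting the description $\mathrm R_{X_a^*}(\pi)\cong\big((\eta^{-1})^{\times a}\boxtimes\sigma\big)\oplus\rho$ from Lemma \ref{L:geometricirr3} (with $\rho$ admissible) and dualizing, and using the standard compatibility of contragredients with parabolic induction on the general linear side — which yields $\big((\eta^{-1})^{\times a}\big)^\vee\cong\eta^{\times a}$ as representations of $\GL(X_a)$, the cited irreducibility of $\eta^{\times a}$ over $\GL_a(\rD)$ making this bookkeeping transparent — I obtain
\[
\mathrm R_{X_a}(\pi^\vee)\;\cong\;\big(\eta^{\times a}\boxtimes\sigma^\vee\big)\oplus\rho^\vee .
\]
Thus $\eta^{\times a}\boxtimes\sigma^\vee$ is a direct summand, hence a quotient, of $\mathrm R_{X_a}(\pi^\vee)$, and combined with the first paragraph this produces the required embedding $\pi^\vee\hookrightarrow\eta^{\times a}\rtimes\sigma^\vee$.

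The only point that takes a little care — and it is not a serious obstacle — is the identity $\mathrm R_{X_a}(\pi^\vee)\cong(\mathrm R_{X_a^*}(\pi))^\vee$: one must confirm that $\rP(X_a)$ and $\rP(X_a^*)$ are genuinely opposite (not merely conjugate), so that the Casselman pairing applies in this form, which is immediate from the structure theory of parabolic subgroups of the quaternionic isometry groups set up before Lemma \ref{L:key0}; everything else is formal. As a byproduct, Lemma \ref{L:geometric4} together with the uniqueness already established shows $\mathrm m_\eta(\pi^\vee)\geq a=\mathrm m_\eta(\pi)$, and applying the same to $\pi^\vee$ gives the reverse inequality, whence $\mathrm m_\eta(\pi)=\mathrm m_\eta(\pi^\vee)$ and $(\pi^\vee)_\eta\cong\sigma^\vee\cong(\pi_\eta)^\vee$, which are the remaining assertions of \eqref{piiso}.
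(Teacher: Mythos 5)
Your proof is correct and matches the paper's argument in substance: the paper likewise starts from the embedding $(\eta^{-1})^{\times a}\boxtimes\sigma\hookrightarrow\mathrm R_{X_a^*}(\pi)$ of Lemma~\ref{L:geometricirr3}, dualizes and invokes the second adjointness theorem (equivalently, Casselman duality between Jacquet modules with respect to the opposite parabolics $\rP(X_a)$ and $\rP(X_a^*)$) to obtain $\mathrm R_{X_a}(\pi^\vee)\twoheadrightarrow\eta^{\times a}\boxtimes\sigma^\vee$, and concludes by Frobenius reciprocity. Your version makes the same steps a bit more explicit by computing $\mathrm R_{X_a}(\pi^\vee)$ as a direct sum, but the key lemmas and the route are identical.
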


 \begin{proof}
 Lemma \ref{L:geometricirr3} implies that
 \[
   (\eta^{-1})^{\times a}\boxtimes \sigma\hookrightarrow  \mathrm R_{X_a^*}(\pi).
 \]
 By dualizing and using the second adjointness theorem, we see that
 \[
   \mathrm R_{X_a}(\pi^\vee)\twoheadrightarrow \eta^{\times a}\boxtimes \sigma^\vee.
 \]
 This implies that $\pi^\vee \hookrightarrow\eta^{\times a}\rtimes \sigma^\vee$.

\end{proof}

Lemma \ref{L:geometric4} implies that $\mathrm m_\eta(\pi^\vee)\geq \mathrm m_\eta(\pi)$. The same argument shows that $\mathrm m_\eta(\pi)\geq \mathrm m_\eta(\pi^\vee)$. This proves that $\mathrm m_\eta(\pi^\vee)=\mathrm m_\eta(\pi)$. Lemma \ref{L:geometric4} then further implies that $(\pi^\vee)_\eta\cong (\pi_\eta)^\vee$. This finally finishes the proof of Proposition \ref{P:induced}.

\vskip 5pt

 \section{\bf Induced representations and theta correspondence}

In this section, we apply the results of the previous section to the theta correspondence.
Write $\eta': \rF^\times\rightarrow \CC^\times$ for the character such that
\[
  \eta' \cdot \chi_V=\eta \cdot \chi_U.
\]
Then $(\eta')^2 \ne 1$  since   $\eta^2 \ne 1$.
Denote  by $q_V$ the Witt index of $V$. Similarly to \eqref{flagu0}, we fix two sequences
\[
  0=Y_0\subset Y_1\subset \cdots \subset Y_{q_V}\qquad \textrm{and}\qquad Y_{q_V}^*\supset \cdots \supset Y_1^*\supset Y_0^*=0
\]
of totally isotropic subspaces of $V$ with the analogous properties as in \eqref{flagu0}. We apply the analogous notation as in the last section  to the space $V$. In particular, $\mathrm m_{\eta'}(\pi')$ is defined for every $\pi'\in \Irr(\rG(V))$. Define $\pi_\eta$ ($\pi\in \Irr(\rG(U))$ and $\pi'_{\eta'}$ as in Proposition \ref{P:induced}.

For all integers $0\leq a\leq q_U$ and $0\leq k\leq q_V$, write $\omega_{a,k}:=\omega_{U_a, V_k,\psi}$, which is an irreducible smooth representation of $(\rG(U_a)\times \rG(V_k))\ltimes \rH(U_a\otimes_\rD V_k)$, as defined in Section \ref{secdouble}.
\vskip 5pt

The rest of this section is devoted to a proof of the following key proposition.
\begin{prop}\label{induction}
Assume that
\[
  \eta\neq \chi_V \abs{\,\cdot\,}_\rF^{s_{V,U}+1} \qquad \textrm{and}\qquad   \eta'\neq \chi_U \abs{\cdot}_\rF^{s_{U,V}+1}.
\]
Then for all $\pi\in \Irr(\rG(U))$ and $\pi'\in \Irr(\rG(V))$ such that $\Hom_{\rG(U)\times \rG(V)}(\omega, \pi\boxtimes \pi')\neq 0$, one has
\[
  \mathrm m_{\eta}(\pi)=\mathrm m_{\eta'}(\pi'),
\]
and there is a linear embedding
\[
   \Hom_{\rG(U)\times \rG(V)}(\omega, \pi\boxtimes \pi')\hookrightarrow \Hom_{\rG(U_a)\times \rG(V_a)}(\omega_{a,a}, \pi_\eta\boxtimes \pi'_{\eta'}),
\]
where $a:=\mathrm m_{\eta}(\pi)$.
\end{prop}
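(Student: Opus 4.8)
The plan is to combine the see-saw/doubling philosophy of Section \ref{secdouble} with the structural results on Jacquet modules of theta lifts, in the spirit of Kudla's filtration argument. First I would fix a nonzero $\varphi \in \Hom_{\rG(U)\times \rG(V)}(\omega, \pi\boxtimes\pi')$; equivalently, $\pi\boxtimes\pi'$ is a quotient of $\omega$ viewed as a $\rG(U)\times\rG(V)$-representation. Setting $a := \mathrm m_\eta(\pi)$, Proposition \ref{P:induced} gives the embedding $\pi\hookrightarrow \eta^{\times a}\rtimes\pi_\eta$, and dually $\mathrm R_{X_a^*}(\pi)\twoheadrightarrow (\eta^{-1})^{\times a}\boxtimes\pi_\eta$ (Lemma \ref{L:geometricirr3}). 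The strategy is: apply the Jacquet functor $\mathrm R_{X_a^*}$ along the $\rG(U)$-side of $\omega$ and identify the resulting representation of $\GL(X_a)\times\rG(U_a)\times\rG(V)$ in terms of theta lifts for smaller groups, using the standard computation of $\mathrm R_{X_t^*}(\omega)$ as a successive extension of pieces of the form $C_c^\infty(\mathrm{Isom}(\,\cdot\,))\otimes (\text{char})\otimes \omega_{a,k}$ (this is the "mixed model" computation underlying Lemma \ref{L:key0}, applied to $\omega$ rather than $\mathrm I(s)$). The character-twist bookkeeping is exactly what forces the relation $\eta'\chi_V = \eta\chi_U$, since the $\GL$-factor acts on $\omega$ through $\chi_V|\det|^{s_{V,U}}$-type twists coming from \eqref{lambdad}, which match the $\GL(X_a)$-action on the $V$-side through $\chi_U$.

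Concretely, after taking the $(\eta^{-1})^{\times a}$-isotypic component of $\mathrm R_{X_a^*}(\omega)$ under $\GL(X_a)$, the hypotheses $\eta \ne \chi_V|\cdot|_\rF^{s_{V,U}+1}$ and $\eta' \ne \chi_U|\cdot|_\rF^{s_{U,V}+1}$ are precisely the conditions ensuring that the only surviving graded piece is the "bottom" one, i.e. the term involving $\omega_{a,a} = \omega_{U_a,V_a,\psi}$ and $C_c^\infty$ of an isometry space that, after pairing against $\pi_\eta\boxtimes(\cdot)$, collapses; the excluded values of $\eta$ are exactly those at which a higher graded piece (involving $\omega_{a,k}$ with $k\ne a$, or a $\GL$-character of the "wrong" twist) could contribute an extra $(\eta^{-1})^{\times a}$-isotypic vector. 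This is the step where I would invoke a Lemma-\ref{L:geometric0}-style argument: the relevant $\GL(X_a)$-Jacquet module of the boundary contributions has no $\eta^{\times a}$ (resp.\ $(\eta^{-1})^{\times a}$) subquotient once those two exceptional characters are avoided, and then the $\mathrm{Ext}$-vanishing of Lemma \ref{extgg} lets one extract the desired direct summand rather than merely a subquotient. Pushing $\varphi$ through this summand and using the identification of $\pi_\eta$ via $\mathrm R_{X_a^*}$ (first line of \eqref{piiso}) and the analogous identification of $\pi'_{\eta'}$ via $\mathrm R_{Y_a^*}$ yields simultaneously that $\mathrm m_\eta(\pi) = a \le \mathrm m_{\eta'}(\pi')$ and a nonzero element of $\Hom_{\rG(U_a)\times\rG(V_a)}(\omega_{a,a},\pi_\eta\boxtimes\pi'_{\eta'})$; symmetry in $(U,V)$ (the hypotheses and conclusion are symmetric, with $\eta\leftrightarrow\eta'$, $\chi_U\leftrightarrow\chi_V$, $s_{U,V}\leftrightarrow s_{V,U}$) gives the reverse inequality $\mathrm m_{\eta'}(\pi')\le\mathrm m_\eta(\pi)$, hence equality, and upgrades the map to an embedding on the full $\Hom$-space.

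I expect the main obstacle to be the bookkeeping in the Jacquet-module computation of $\omega$ along $X_a^*$: one must track the $\GL(X_a)$-action, the unnormalized-vs-normalized modulus characters, the character $\chi_V |\det|_\rF^{s_{V,U}}$ coming from \eqref{lambdad}, and the interplay with the $V$-side flag $0\subset Y_1\subset\cdots$, all at once, and verify that the two exceptional values $\chi_V|\cdot|_\rF^{s_{V,U}+1}$ and $\chi_U|\cdot|_\rF^{s_{U,V}+1}$ are exactly the obstructions — off by any shift and the argument breaks. A clean way to organize this is to do the $\rG(U)$-side Jacquet module first, obtain a filtration whose graded pieces are indexed by $\rG(U_a)$-theta lifts twisted by $\GL$-characters, then run the $\rG(V)$-side analysis on each graded piece and observe that the hypothesis on $\eta'$ kills all but the $k=a$ term; the see-saw identity of Section \ref{secdouble} is what glues the two sides. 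The secondary technical point is ensuring that taking coinvariants/isotypic components commutes appropriately with $\Hom$ into the irreducible $\pi\boxtimes\pi'$, which is where one again leans on Lemma \ref{extgg} and the irreducibility of socles from Proposition \ref{P:induced}.
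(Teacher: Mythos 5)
Your proposal follows essentially the same route as the paper: use Proposition~\ref{P:induced} to reduce $\pi$ to $\eta^{\times a}\rtimes\pi_\eta$, pass to the $\GL(X_a)$-Jacquet module of $\omega$, invoke Kudla's filtration (the paper's Lemma~\ref{L:kudla}, which is indeed the ``mixed model'' filtration you describe), show that the hypothesis on $\eta$ kills all graded pieces except the bottom one $J_a$ (this is Lemma~\ref{boundo}), recover $\omega_{a,a}$ and identify $\pi'_{\eta'}$ via $\mathrm R_{Y_a^*}(\pi')$, and then symmetrize in $(U,V)$ for the equality of $\mathrm m$-invariants. That is the proof. Two details in your write-up are, however, off-target and worth flagging. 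First, the see-saw identity of Section~\ref{secdouble} plays no role here; it concerns $\omega^\square$ on $\rG(U^\square)\times\rG(V^\square)$, while Proposition~\ref{induction} is about $\omega$ itself, and Kudla's filtration already carries the $\rG(V)$-action within its graded pieces, so nothing needs to be ``glued.'' Second, you invoke the Ext-vanishing (Lemma~\ref{extgg}) ``to extract a direct summand rather than a subquotient,'' but no splitting is required: since we are computing $\Hom$ out of a filtered object into a fixed target, the vanishing $\Hom(J_k,\,\cdot\,)=0$ for $k<a$ yields directly an injection $\Hom(\mathrm R_{X_a}(\omega),\,\cdot\,)\hookrightarrow\Hom(J_a,\,\cdot\,)$ by walking up the filtration; the Ext-machinery is used only in the earlier Proposition~\ref{P:induced} to split the Jacquet module of $\pi$, not here. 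Also note that it is the hypothesis on $\eta$ (not $\eta'$) that eliminates the $k\ne a$ pieces of the $\rG(U)$-side filtration; the hypothesis on $\eta'$ enters only when you rerun the argument with the roles of $U$ and $V$ swapped. Finally, one observation you implicitly use but should make explicit: since the graded piece $J_k$ for $k<a$ involves only $\eta^{\times(a-k)}$ with $a-k\ge 2$ (which is not one-dimensional) except when $k=a-1$, the single excluded value $\eta=\chi_V|\cdot|_\rF^{s_{V,U}+1}$ suffices; this is the point of the cuspidal support computation in the proof of Lemma~\ref{boundo}.
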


For each right $\rD$-vector space $X$, write $X^\iota$ for the left $\rD$-vector which equals $X$ as an abelian group and whose scalar multiplication is given by
\[
  \alpha v:=v \alpha^\iota,\qquad \alpha\in \rD, \,v\in X^\iota.
\]
We first recall the well-known computation of the Jacquet module of the Weil representation (see \cite[Theorem 2.8]{k83} and \cite[Chapter 3, Section IV.5]{mvw}).
 \vskip 5pt

 \begin{lem} \label{L:kudla}
For each $0\leq a\leq q_U$, the normalized Jacquet module $\mathrm R_{X_a}(\omega)$ has
a $\GL(X_a)\times \rG(U_a)\times \rG(V)$-equivariant filtration
\[
  \mathrm R_{X_a}(\omega) = R_0 \supset R_1 \supset \cdots \supset R_{a'}
\supset R_{a'+1} = 0 \]
whose successive quotient
is
\[
 J_k:=R_k/R_{k+1}\cong \Ind^{\GL(X_a) \times \rG(U_a) \times \rG(V)}_{\rP(X_{a-k}, X_a)
\times \rG(U_a) \times \rP(Y_k)}  \left(\chi_V
|{\det}_{X_{a-k}}|_\rF^{s_{V,U}+a-k} \otimes
C^{\infty}_c(\Isom(X_a^\iota/X_{a-k}^\iota, Y_k)) \otimes
\omega_{a,k}\right),
\]
 where
\begin{itemize}
\item $a':=\min\{a, q_V\}$ and $0\leq k\leq a'$;
\item $\rP(X_{a-k}, X_a)$ is the parabolic subgroup of $\GL(X_a)$ stabilizing $X_{a-k}$;
\item $\det_{X_{a-k}}: \GL(X_{a-k})\rightarrow \rF^\times$ denotes the reduced norm map, and $\chi_V$ is viewed as a character of $\GL(X_{a-k})$ via the pullback through this map;
\item $\Isom(X_a^\iota/X_{a-k}^\iota, Y_k)$ is the set of $\rD$-linear isomorphisms from $X_a^\iota/X_{a-k}^\iota$ to $Y_k$, and $\GL(X_a/X_{a-k})\times\GL(Y_k)$ acts on
  $C_c^\infty(\Isom(X_a^\iota/X_{a-k}^\iota,Y_k))$ as
  \[
   ((b,c)\cdot f)(g)=\chi_V (\det b)\chi_U(\det
  c)f(c^{-1}g b),
  \]
   for $(b,c)\in \GL(X_a/X_{a-k})\times\GL(Y_k)$, $f\in
  C_c^\infty(\Isom(X_a^\iota/X_{a-k}^\iota,Y_k))$ and $g\in\Isom(X_a^\iota/X_{a-k}^\iota,Y_k)$.
\end{itemize}
In particular, if $a'=a$, then the bottom piece
of the filtration is
\[ J_a \cong \Ind^{\GL(X_a) \times G(U_a) \times \rG(V)}_{\GL(X_a)
\times \rG(U_a) \times \rP(Y_a)} \left(C^{\infty}_c(\Isom(X_a^\iota,Y_a)) \otimes \omega_{a,a}\right). \]
\end{lem}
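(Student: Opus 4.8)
The plan is to compute $\mathrm R_{X_a}(\omega)$ directly in a \emph{mixed Schr\"odinger model} of $\omega$ adapted to the totally isotropic subspace $X_a\otimes_\rD V$ of $W$; this is the computation of Kudla \cite[Theorem 2.8]{k83} (see also \cite[Chapter 3, Section IV.5]{mvw}), and since it is purely algebraic it carries over to any non-archimedean $\rF$ of characteristic not $2$. Writing $U=X_a\oplus U_a\oplus X_a^*$ and $W_a:=U_a\otimes_\rD V$, one has the partial polarization $W=(X_a\otimes_\rD V)\oplus W_a\oplus(X_a^*\otimes_\rD V)$, in which $(X_a\otimes_\rD V)\oplus(X_a^*\otimes_\rD V)$ is a split symplectic space perfectly paired by $\la\,,\,\ra_W$ while $W_a$ is the symplectic space attached to the pair $(U_a,V)$. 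Correspondingly $\omega\cong C^\infty_c(X_a\otimes_\rD V)\otimes\omega_{U_a,V}$. First I would write out, in this model, the actions of the maximal parabolic $\rP(X_a)$ of $\rG(U)$ --- with Levi $\GL(X_a)\times\rG(U_a)$ and unipotent radical $N$ --- and of $\rG(V)$: the factor $\GL(X_a)$ acts on $C^\infty_c(X_a\otimes_\rD V)$ through its natural action on $X_a$, appropriately normalised and twisted by $\chi_V\circ{\det}_{X_a}$; the factor $\rG(U_a)$ acts only on $\omega_{U_a,V}$; the group $\rG(V)$ acts on both tensor factors, through its action on the $V$-variable of $X_a\otimes_\rD V$ and through $\rG(V)\hookrightarrow\Sp(W_a)$; and $N$ acts by a mixture of multiplication by quadratic characters and of Heisenberg-type translations in the $W_a$-variable.

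Next I would compute the $N$-coinvariants, organised along the (at most) two-step structure $1\to Z\to N\to N/Z\to1$, where $Z=[N,N]$ is a vector group of Hermitian forms on $X_a$ and $N/Z\cong\Hom_\rD(U_a,X_a)$. The subgroup $Z$ acts only on the factor $C^\infty_c(X_a\otimes_\rD V)$, by multiplication by the characters $w\mapsto\psi(B_z(w))$, where $B_z$ is an $\rF$-valued quadratic form on $X_a\otimes_\rD V$ depending linearly on $z$; hence $(\omega)_Z$ is realised by the Schwartz functions supported on the common zero locus $\Omega$ of the $B_z$, and $\Omega$ is precisely the set of $w$ whose induced $\rD$-linear map into $V$ has totally isotropic image. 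The locus $\Omega$ is stable under $\GL(X_a)\times\rG(U_a)\times\rG(V)$ and stratified by the rank $k:=\dim_\rD(\mathrm{image})$, with $0\le k\le a':=\min\{a,q_V\}$; the union of the strata of rank $\ge k$ is open in $\Omega$, so extending functions by zero realises $C^\infty_c$ of it as a $\GL(X_a)\times\rG(U_a)\times\rG(V)$-stable subspace. Passing to $N/Z$-coinvariants (compatibly with this stratification, which is the point where the Geometric-Lemma bookkeeping is needed) then yields the asserted decreasing filtration $R_\bullet$ of $\mathrm R_{X_a}(\omega)$.

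The remaining step is to identify the graded piece $J_k$, which is supported on the rank-$k$ stratum. Using the $\GL(X_a)\times\rG(V)$-action one brings the kernel of $w$ to $X_{a-k}$ and the image to $Y_k$, reducing to the open $\GL(X_a/X_{a-k})\times\GL(Y_k)$-orbit $\Isom(X_a^\iota/X_{a-k}^\iota,Y_k)$ inside that stratum; this identifies the stratum-$k$ piece with $\Ind_{\rP(X_{a-k},X_a)\times\rG(U_a)\times\rP(Y_k)}^{\GL(X_a)\times\rG(U_a)\times\rG(V)}$ applied to $C^\infty_c(\Isom(X_a^\iota/X_{a-k}^\iota,Y_k))\otimes\omega_{U_a,V}$, twisted by a character of the $\GL(X_{a-k})$-factor. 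The residual action of $\GL(X_{a-k})$ on the kernel part, combined with the modulus character of $\rP(X_a)$ and the metaplectic normalisation of the $\GL(X_a)$-action on $C^\infty_c(X_a\otimes_\rD V)$, collapses into exactly the character $\chi_V\,|{\det}_{X_{a-k}}|_\rF^{s_{V,U}+a-k}$ on $\GL(X_{a-k})$ (the shift by $\pm\frac14$ inside $s_{V,U}$ reflecting the quaternionic normalisation of the half-sum of roots, i.e.\ the reduced degree of $\rD$); and on the $\omega_{U_a,V}$-factor, taking $N/Z$-coinvariants over this stratum amounts to taking the coinvariants of $\omega_{U_a,V}$ under the translations by the totally isotropic subgroup $U_a\otimes_\rD Y_k$ of $W_a$, which is canonically $\omega_{U_a,V_k}=\omega_{a,k}$. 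Assembling the three factors gives the stated formula for $J_k$, and when $a'=a$ (i.e.\ $q_V\ge a$) the case $k=a$, where $X_{a-k}=0$, gives the displayed bottom piece $J_a$.

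The main obstacle is the bookkeeping of normalising characters. One has to track the modulus characters $\delta^{1/2}$ of the relevant parabolics in $\rG(U)$, $\GL(X_a)$ and $\rG(V)$, the Weil-index cocycles that enter the mixed model, and the discriminant characters $\chi_U$ and $\chi_V$ (including the comparison of the chosen splittings of the metaplectic cover over $\rG(V)$ inside $\Sp(W)$ versus inside $\Sp(W_a)$), and check that they collapse into precisely the single exponent $s_{V,U}+a-k$ and the single character $\chi_V$ appearing in the statement. One also has to verify that the filtration constructed above is equivariant for the full group $\GL(X_a)\times\rG(U_a)\times\rG(V)$, not merely for a Levi subgroup, and that $N/Z$-coinvariants interact with it exactly; this is where the explicit description, in the mixed model, of how $N$ and $\rG(V)$ move the variable $w\in X_a\otimes_\rD V$ is essential, and all of this is carried out in the cited work of Kudla and in \cite[Chapter 3]{mvw}.
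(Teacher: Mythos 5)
The paper offers no proof of this lemma beyond citing Kudla \cite[Theorem 2.8]{k83} and \cite[Chapter 3, Section IV.5]{mvw}, and your sketch is a faithful reconstruction of exactly that standard mixed-model computation: partial polarization along $X_a\otimes_\rD V$, the two-step structure of the unipotent radical, the rank stratification of the totally isotropic locus, and the identification of the graded pieces as induced representations with the stated normalizing character. This is essentially the same approach as the cited sources on which the paper relies, so nothing further is needed.
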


The following lemma is an observation of \cite{gt}.
\begin{lem}\label{boundo}
Let $a$, $k$ and $J_k$ be as in Lemma \ref{L:kudla}. Assume that $\eta\neq \chi_V \abs{\,\cdot\,}_\rF^{s_{V,U}+1}$. Then for all $\sigma\in \Irr(\rG(U_a))$ and $\pi'\in \Irr(\rG(V))$,
\begin{equation}\label{homggg}
  \Hom_{\GL(X_a) \times \rG(U_a) \times \rG(V)}(J_k, \eta^{\times a}\boxtimes \sigma\boxtimes \pi')=0
\end{equation}
whenever $k\neq a$.
\end{lem}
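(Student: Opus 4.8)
The plan is to apply Bernstein's second adjointness to the parabolic induction defining $J_k$ and then to read off what happens on the first $\GL$-factor of the relevant Levi. Since $k\le a'\le a$, the hypothesis $k\ne a$ means $m:=a-k\ge 1$. Write
\[
  M_k:=\GL(X_{a-k})\times\GL(X_a/X_{a-k})\times\rG(U_a)\times\GL(Y_k)\times\rG(V_k)
\]
for the Levi factor of the parabolic occurring in Lemma~\ref{L:kudla} and $\tau_k$ for the corresponding inducing representation, so that $J_k$ is the normalized induction of $\tau_k$. Bernstein's second adjointness then gives
\[
  \Hom_{\GL(X_a)\times\rG(U_a)\times\rG(V)}\bigl(J_k,\ \eta^{\times a}\boxtimes\sigma\boxtimes\pi'\bigr)\ \cong\ \Hom_{M_k}\bigl(\tau_k,\ \mathrm{R}'(\eta^{\times a})\boxtimes\sigma\boxtimes\rho'\bigr),
\]
where $\mathrm{R}'(\eta^{\times a})$ is the normalized Jacquet module of $\eta^{\times a}$ along the parabolic of $\GL(X_a)$ opposite to $\rP(X_{a-k},X_a)$, and $\rho'$ is a Jacquet module of $\pi'$ whose precise shape will not matter.

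Next I would isolate the first factor $\GL(X_{a-k})$ of $M_k$. On the source side, $\tau_k$ restricts there to the character $\chi:=\chi_V\,|{\det}_{X_{a-k}}|_\rF^{s_{V,U}+a-k}$. On the target side, $\eta^{\times a}$ has cuspidal support $(\eta,\dots,\eta)$, so every irreducible subquotient of $\mathrm{R}'(\eta^{\times a})$ has $\GL(X_{a-k})$-cuspidal support $(\eta,\dots,\eta)$, hence, by the uniqueness of the irreducible representation of $\GL_{a-k}(\rD)$ with this cuspidal support (which is $\eta^{\times(a-k)}$, cf. \cite{Se}), has $\GL(X_{a-k})$-component isomorphic to $\eta^{\times(a-k)}$. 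Thus every irreducible $\GL(X_{a-k})$-subquotient of the target is $\cong\eta^{\times(a-k)}$; since the target has finite length and $\chi$ is irreducible, a nonzero $\Hom$ forces $\chi\cong\eta^{\times(a-k)}$ (take an irreducible quotient of the image of a nonzero map: being a quotient of $\tau_k$ it has $\GL(X_{a-k})$-component $\chi$, and being a subquotient of the target it has $\GL(X_{a-k})$-component $\eta^{\times(a-k)}$).

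It then remains to exclude $\chi\cong\eta^{\times(a-k)}$. If $a-k\ge 2$, then $\eta^{\times(a-k)}$ is irreducible (\cite{Se}) and infinite-dimensional, being obtained by normalized parabolic induction from a proper parabolic of $\GL_{a-k}(\rD)$, whereas $\chi$ is one-dimensional---a contradiction. If $a-k=1$, then $\chi$ and $\eta^{\times 1}=\eta$ are the pullbacks to $\GL(X_1)=\rD^\times$ of the characters $\chi_V\,\abs{\,\cdot\,}_\rF^{s_{V,U}+1}$ and $\eta$ of $\rF^\times$ along the surjective reduced norm, so $\chi\cong\eta^{\times 1}$ exactly when $\eta=\chi_V\,\abs{\,\cdot\,}_\rF^{s_{V,U}+1}$, which is excluded by hypothesis. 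This establishes \eqref{homggg} for all $k\ne a$. The only mildly technical points are the second-adjointness bookkeeping and the identification of the $\GL(X_{a-k})$-component of $\mathrm{R}'(\eta^{\times a})$; once these are in hand, the hypothesis on $\eta$ has been arranged so as to kill precisely the single borderline case $a-k=1$, while every case $a-k\ge 2$ is automatic.
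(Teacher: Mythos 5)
Your proposal is correct and follows the same route as the paper: apply Bernstein's second adjointness to $J_k$, and note that every irreducible subquotient of the opposite Jacquet module of $\eta^{\times a}$ along $\GL(X_{a-k})\times\GL(X_a/X_{a-k})$ has $\GL(X_{a-k})$-component $\eta^{\times(a-k)}$, by the cuspidal-support constraint and the irreducibility of $\eta^{\times(a-k)}$. You then fill in a step the paper leaves tacit (it concludes directly with ``Therefore the lemma follows''): one must rule out $\chi_V|{\det}_{X_{a-k}}|^{s_{V,U}+a-k}\cong\eta^{\times(a-k)}$, which for $a-k\ge 2$ fails on dimension grounds and for $a-k=1$ is precisely excluded by the hypothesis $\eta\neq\chi_V\,\abs{\,\cdot\,}_\rF^{s_{V,U}+1}$ together with the surjectivity of the reduced norm $\rD^\times\to\rF^\times$. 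This is the same argument, just made explicit.
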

\begin{proof}
Using the second adjointness theorem, it suffices to show that
\[
   \Hom_{\GL(X_{a-k})}(\chi_V \abs{{\det}_{X_{a-k}}}_\rF^{s_{V,U}+a-k},  \bar{\mathrm R}_{X_{a-k},X_a}(\eta^{\times a}))=0,
\]
where $\bar{\mathrm R}_{X_{a-k},X_a}$ denotes the normalized Jacquet functor attached  to the parabolic subgroup of $\GL(X_a)$ stabilizing a complement of $X_{a-k}$ in $X_a$.
By analysing the cuspidal data, we know that every irreducible subrepresentation of $\bar{\mathrm R}_{X_{a-k},X_a}(\eta^{\times a})$ is isomorphic to $\eta^{\times (a-k)}\boxtimes \eta^{\times k}$, as a representation of $\GL(X_{a-k})\times \GL(X_a/X_{a-k})$. Therefore the lemma follows.
\end{proof}

Now we come to the proof of Proposition \ref{induction}. Put $a:=\mathrm m_{\eta}(\pi)$. Then we have
\begin{eqnarray*}
  0 &\neq &\Hom_{\rG(U)\times \rG(V)}(\omega, \pi\boxtimes \pi')\\
  &\hookrightarrow& \Hom_{\rG(U)\times \rG(V)}(\omega, (\eta^{\times a}\rtimes \pi_\eta)\boxtimes \pi')\\
   &=&\Hom_{\GL(X_a)\times \rG(U_a) \times \rG(V)} ( \mathrm R_{X_a}(\omega), \eta^{\times a}\boxtimes \pi_\eta \boxtimes
\pi') \\
   &\hookrightarrow&  \Hom_{\GL(X_a)\times \rG(U_a) \times \rG(V)} (J_a, \eta^{\times a}\boxtimes \pi_\eta \boxtimes
\pi')  \qquad \qquad\textrm{(by Lemma \ref{boundo})}\\
  &\cong &  \Hom_{\GL(X_a)\times \rG(U_a) \times \GL(Y_a)\times \rG(V_a)} (C^{\infty}_c(\Isom(X_a^\iota,Y_a)) \otimes \omega_{a,a}, \eta^{\times a}\boxtimes \pi_\eta \boxtimes
\mathrm R_{Y_a^*}(\pi')) \\
 && \quad \qquad\textrm{(by the second ajointness theorem)}\\
  &\cong &  \Hom_{\rG(U_a) \times \GL(Y_a)\times \rG(V_a)} (({\eta'}^{-1})^{\times a}\boxtimes \omega_{a,a}, \pi_\eta \boxtimes
\mathrm R_{Y_a^*}(\pi'))\\
 & \cong &  \Hom_{\rG(U_a) \times \rG(V_a)} (\omega_{a,a}, \pi_\eta \boxtimes \pi'_a),\\
\end{eqnarray*}
where
\[
 \pi_a':=\Hom_{\GL(Y_a)}(({\eta'}^{-1})^{\times a}, \mathrm R_{Y_a^*}(\pi')).
\]
Therefore $\pi_a'\neq 0$, and hence
\[
  \Hom_{\GL(Y_a)\times \rG(V_a)}(({\eta'}^{-1})^{\times a}\boxtimes {\pi_a'}, \mathrm R_{Y_a^*}(\pi'))\neq 0.
\]
Dualizing and using the second adjointness theorem, we see that
\[
  \Hom_{\GL(Y_a)\times \rG(V_a)}(\mathrm R_{Y_a}({\pi'}^\vee), {\eta'}^{\times a}\boxtimes {\pi_a'}^\vee)\neq 0.
\]
This proves that
\[
\mathrm m_{\eta'}(\pi')=\mathrm m_{\eta'}({\pi'}^\vee)\geq a=\mathrm m_{\eta}(\pi).
\]
The same argument shows that $\mathrm m_{\eta}(\pi)\geq \mathrm m_{\eta'}(\pi')$, and hence  $\mathrm m_{\eta'}(\pi')=\mathrm m_{\eta}(\pi)$. Therefore  $\pi'_a\cong\pi'_{\eta'}$ by Proposition \ref{P:induced}. This finishes the proof of Proposition \ref{induction}.


 \section{\bf Proof of Proposition \ref{T:mainq2}}

In this section, we finish the proof of Proposition \ref{T:mainq2} by induction on $\dim U$.  As in Proposition \ref{T:mainq2}, let $\pi,\sigma\in \Irr(\rG(U))$ and assume that $s_{U,V}>0$.  In view of Proposition \ref{P:nonb}, we may assume that  $\pi\boxtimes \sigma$ lies on the boundary of ${\rm I}(s_{U,V})$. Then there is an integer $0 < t  \leq q_U$ such that
  \begin{equation}\label{bound2}
  \Hom_{\rG(U) \times \rG(U^-)}({R}_t(s_{U,V}),   \pi \boxtimes \sigma) \ne 0.
  \end{equation}
Note that
\begin{eqnarray}\label{countdim}
   &&  \dim  \Hom_{\rG(V)}(\theta_{\omega}(\pi)\otimes \theta_{\omega^-}(\sigma),\mathbb C) \\
  \nonumber &=& \sum_{\pi'\in \Irr(\rG(V))}  \dim \Hom_{\rG(U)\times \rG(V)}(\omega, \pi\boxtimes \pi')\, \cdot\,
    \dim \Hom_{\rG(U)\times \rG(V)}(\omega^-, \sigma\boxtimes {\pi'}^\vee).
\end{eqnarray}
We assume that the value of the above equality is non-zero, as Proposition \ref{T:mainq2} is otherwise trivial.
Then there is an irreducible representation $\pi'\in \Irr(\rG(V))$ such that
\begin{equation}\label{homdouble}
  \Hom_{\rG(U)\times \rG(V)}(\omega, \pi\boxtimes \pi')\neq 0\quad \textrm{and}\quad \Hom_{\rG(U)\times \rG(V)}(\omega^-, \sigma\boxtimes {\pi'}^\vee)\neq 0.
\end{equation}

By the second adjointness theorem, \eqref{bound2} implies that
\begin{equation}\label{homgt}
  \Hom_{\GL(X_t)} (\chi |{\det}_{X_{t}}|_\rF^{s_{U,V} + t}, \mathrm R_{X_t^*} (\pi))\neq 0.
\end{equation}
Put
\[
  \eta:=\chi_V \,\abs{\,\cdot\,}^{s_{V,U}-2t+1}\quad \textrm{and}\quad \eta':=\chi_U \,\abs{\,\cdot\,}^{s_{V,U}-2t+1}.
\]
Using the second adjointness theorem and the Langlands parameter of the character $\chi_V |{\det}_{X_{t}}|_\rF^{s_{V,U}-t}$, \eqref{homgt} implies that
\[
  \mathrm m_{\eta}(\pi)=\mathrm m_{\eta}(\pi^\vee)>0.
\]
Noting that
\[
\eta\neq \chi_V \abs{\,\cdot\,}_\rF^{s_{V,U}+1} \qquad \textrm{and}\qquad   \eta'\neq \chi_U \abs{\cdot}_\rF^{s_{U,V}+1},
\]
Proposition \ref{induction} (and its analog for $\omega^-$) then implies that
\[
   \mathrm m_{\eta}(\pi)=\mathrm m_{\eta'}(\pi')=\mathrm m_{\eta'}({\pi'}^\vee)=\mathrm m_{\eta}(\sigma).
\]

By  the induction assumption, Proposition \ref{T:mainq2} holds for the pair $(U_a, V_a)$, where $a:=\mathrm m_{\eta}(\pi)$. As we have seen at the end of the introduction,  this  implies that Theorem \ref{T:howe3}  holds for the pair $(\rG(U_a), \rG(V_a))$. Together with Proposition \ref{induction}, this implies that
\begin{equation}\label{sigmaa0}
  {\pi'}_{\eta'}\cong\theta_{\omega_{a,a}}(\pi_\eta)
\end{equation}
and
\begin{equation}\label{sigmaa}
   \pi_\eta\cong\theta_{\omega_{a,a}}(\pi'_{\eta'})\qquad  \textrm{and}\qquad \sigma_\eta\cong \theta_{\omega^-_{a,a}}((\pi'_{\eta'})^\vee).
\end{equation}
Here $\omega^-_{a,a}:=\omega^-_{U_a,V_a,\psi}$. Proposition \ref{induction} and \eqref{sigmaa0} imply that $\pi'$ is isomorphic to the socle of ${\eta'}^{\times a}\rtimes \theta_{\omega_{a,a}}(\pi_\eta)$. Therefore,  there is a unique $\pi'\in \Irr(\rG(U))$ which satisfies \eqref{homdouble}. Then
Proposition \ref{induction} implies that the value of \eqref{countdim} is  $1$.

On the other hand, \eqref{sigmaa} and  the induction assumption imply that $\pi_\eta^\vee\cong \sigma_\eta$, which further implies that $\pi^\vee\cong \sigma$ by
Proposition \ref{P:induced}. Therefore \eqref{dimleq1} of Proposition \ref{T:mainq2} is an equality. This finishes the proof of Proposition \ref{T:mainq2}.

\vskip 5pt


\end{document}